\theoremstyle{plain}
\newtheorem{theorem}{Theorem}[section]
\newtheorem{lemma}[theorem]{Lemma}
\newtheorem{corollary}[theorem]{Corollary}
\newtheorem*{quotthm}{Theorem}
\theoremstyle{definition}
\newtheorem{definition}[theorem]{Definition}
\newtheorem{example}[theorem]{Example}
\theoremstyle{remark}
\newtheorem{remark}[theorem]{Remark}
{\it}{\rm}
\newcommand{\R}{\mathbb{R}}
\newcommand{\del}{\partial}
\begin{document}
\title{Taut foliations in surface bundles with multiple boundary components}

\date{September 15, 2013}

\author[Kalelkar]{Tejas Kalelkar}
\address{Department of Mathematics, Washington University in St. Louis, St. Louis, MO 63130}
\email{tejas@math.wustl.edu}

\author[Roberts]{Rachel Roberts}
\address{Department of Mathematics, Washington University in St. Louis, St. Louis, MO 63130}
\email{roberts@math.wustl.edu}

\keywords{Dehn filling, taut foliation, fibered 3-manifold, contact structure, open book decomposition}

\subjclass[2000]{Primary 57M50.}

\begin{abstract}
Let $M$ be a fibered  3-manifold with multiple boundary components. We show
that the fiber structure of $M$ transforms to closely related transversely oriented taut foliations
realizing all rational multislopes in some open neighborhood of the multislope of the fiber.
Each such foliation extends to a taut foliation in the closed 3-manifold obtained by Dehn filling along its boundary multislope.
The existence of these foliations implies that  certain contact structures are
weakly symplectically fillable.
\end{abstract}

\maketitle

\section{Introduction}

Any closed, orientable 3-manifold can be realized by Dehn filling a 3-manifold  which is fibered over $S^1$
\cite{Alex,Myers}.  In other words, any closed oriented 3-manifold can be realized by Dehn filling a 3-manifold
$M_0$, where 
$M_0$ has the form of a mapping torus
$$M_0 = S\times [0,1]/\sim,$$
where $S$ is  a compact orientable surface with nonempty boundary and $\sim$ is an equivalence relation given by $(x,1)\sim (h(x),0)$
for some orientation preserving homeomorphism $h:S\to S$  which fixes the components of $\partial S$ setwise. Although we shall not appeal to this fact in this paper, it is interesting to note that it is possible to assume that $h$ is pseudo-Anosov \cite{CH2}
and hence $M_0$ is hyperbolic \cite{Thurston}.  It is also possible to assume that $S$ has positive genus. Any nonorientable closed 3-manifold admits a double cover of this form.

Taut codimension one foliations are topological objects which have proved very useful in the study of 3-manifolds.   The problem of determining when a 3-manifold contains a taut foliation appears to be a very difficult one. A complete classification exists for Seifert fibered manifolds \cite{Brittenham,EHN,JN,Naimi} but relatively little is known for the case of hyperbolic 3-manifolds. There are many partial results demonstrating existence (see, for example, \cite{DR,Gab,Gab2,Gab3,Li0,LR,R,Ro,R2}) and partial results demonstrating nonexistence \cite{Jun,KM,KMOS,RSS}.  
In this paper, we investigate the existence of taut codimension one foliations in closed orientable 3-manifolds by first constructing taut codimension one foliations in corresponding mapping tori $M_0$. In contrast with the work in \cite{Ro,R2}, we consider the case that the boundary of $M_0$ is not connected.
We obtain the following results. Precise definitions will follow in Section~\ref{prelims}.

\begin{theorem}\label{Main}
Given an orientable, fibered compact 3-manifold, a fibration with fiber surface of positive genus can be modified to yield transversely oriented taut foliations realizing a neighborhood of  rational boundary multislopes about the boundary multislope of the fibration.
\end{theorem}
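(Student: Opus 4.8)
The plan is to start from the suspension foliation of $M_0 = S\times[0,1]/\!\sim$ by fibers and to deform it, keeping all of the deformation supported in collar neighborhoods of the boundary tori, so that near the torus $T_i = \partial_i S\times[0,1]/\!\sim$ the leaves spiral and meet $T_i$ in a linear foliation by circles of a prescribed rational slope $\mu_i$ close to the fiber slope $\lambda_i$. Tautness will be inherited from the fibration: the suspension flow is transverse to the fibers and meets every fiber, and the construction is designed to keep the deformed foliation transverse to a (reparametrised, and spiralled near $\partial M_0$) flow --- in particular the resulting foliations will be transversely oriented --- so that no Reeb component is ever created. As a first step, since isotoping $h$ does not change $M_0$ and since $S$ has positive genus, I would replace $h$ by an isotopic homeomorphism in normal form: a rotation on each component of $\partial S$, and in the interior a composition $h = \tau_{c_N}^{\varepsilon_N}\circ\cdots\circ\tau_{c_1}^{\varepsilon_1}$ of Dehn twists along essential simple closed curves positioned in controlled relation to a fixed collection of properly embedded essential arcs and curves in $S$; the positive genus hypothesis is used here to guarantee the existence of a non-separating curve with the required properties. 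This data exhibits $M_0$ concretely as a union of product blocks $S\times[s_j,s_{j+1}]$ re-glued by the individual twists (equivalently, as carried by an appropriate branched surface).

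\textbf{The spiralling construction.} Inside a product block, in the collar $\partial_i S\times[-\epsilon,0]\times[0,1]$ of the $i$-th vertical boundary annulus, I would replace the product foliation by a ``spiral ramp'': a foliation of $(\text{annulus})\times[0,1]$ that agrees with the product foliation near the inner end and at the two ends in the last coordinate, but whose leaves rotate in the $\partial_i S$ direction by a prescribed amount as one crosses the collar. Reassembling the blocks --- using the twists $\tau_{c_j}^{\varepsilon_j}$ and the boundary rotations of $h$ to keep them matched --- produces a foliation of $M_0$ whose trace on each $T_i$ is a linear foliation by circles, and the cumulative rotation can be varied continuously, hence attains every rational slope in an open interval about $\lambda_i$. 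Because the modification near $T_i$ is supported near $\partial_i S$, the constructions at the different boundary components can be performed simultaneously; a joint accounting of their effect on all of the boundary traces then shows that the set of rational multislopes realized contains an open neighborhood of $(\lambda_1,\dots,\lambda_k)$.

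\textbf{Smoothing, tautness, and Dehn filling.} I would arrange all the pieces to be product-like near the splitting tori and the collar seams, so that the assembled object is an honest $C^{0}$ foliation, and then smooth it. For tautness I would exhibit a closed transversal through every leaf: away from the collars the reparametrised suspension flow works, inside each spiral ramp one builds a short closed transversal capping off against it, and one checks that together they meet every leaf. Finally, capping each $T_i$ with the foliation by meridian disks of the solid torus glued in along $\mu_i$ extends the foliation across the Dehn filling and keeps it taut, the cores of those solid tori providing the remaining transversals.

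\textbf{The main obstacle.} The heart of the argument is the compatibility in the spiralling step: after spiralling in each block and re-gluing by the $\tau_{c_j}^{\varepsilon_j}$, the leaves must fit together into a genuine foliation producing exactly the prescribed boundary traces, with no Reeb component appearing --- this forces the choices of arcs, of the twist factorization, and of the ramp profiles to be made delicately and in concert. New to the case of more than one boundary component is that these choices near distinct tori are coupled through the monodromy, so the realized multislopes need not form a literal product of intervals; the real work is to show that there is enough flexibility to perturb every coordinate of the multislope at once while keeping the gluing valid and tautness intact, so that a genuine open neighborhood of the fiber multislope is covered.
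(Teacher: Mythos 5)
Your setup (factoring the monodromy into Dehn twists and packaging the result as a branched surface) matches the paper's starting point, but the engine of your argument --- the ``spiral ramp'' supported in the collars $\partial_i S\times[-\epsilon,0]\times[0,1]$ --- has a genuine gap, and as literally described it cannot produce a nonzero boundary slope. You require the modified foliation to agree with the product foliation near the inner end of the collar \emph{and} at the two ends $t=s_j,s_{j+1}$ of each block; since the block gluings (interior Dehn twists and boundary rotations) preserve the circles $\partial_iS\times\{0\}\times\{s_j\}$, the trace of the assembled foliation on $T^i$ then contains these closed longitudes as leaves, so the boundary multislope is still $\bar 0$, not the prescribed nearby value. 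Rotating leaves in the $\partial_iS$ direction does not change their slope on $T^i$; to change the slope the leaves must wrap in the suspension direction, which forces a single leaf to traverse many distinct fibers and destroys the claim that the deformation is supported in a boundary collar (and the naive interpolating plane field $\ker(dy-\phi(z)s\,dx)$ on $T^2\times I$ is not integrable). Finally, your assertion that ``the cumulative rotation can be varied continuously, hence attains every rational slope in an open interval'' would make the realized interval essentially independent of how the monodromy cuts across the fiber, which is inconsistent with the Baldwin--Etnyre examples discussed in Section~\ref{Example}: the paper's realizable interval is $(-\frac1n,\infty)$, where $n$ counts the twists about the curve $\beta$ in a Gervais factorization, and it genuinely shrinks as the monodromy gets more complicated.

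The paper's mechanism is global rather than collar-supported, and this is exactly the content your ``main obstacle'' paragraph defers. It chooses a good oriented sequence of parallel tuples of essential, nonseparating arcs $\alpha_i^j$ traversing the entire fiber (Lemma~\ref{lem:Existence of good sequence}, using Corollary~\ref{Gervaisthm} and the positive genus hypothesis), splits the fibration along the product disks $\alpha_i^j\times[\frac in,\frac{i+1}n]$ to form the branched surface $B_\sigma$, and verifies that $B_\sigma$ has no sink disks or half sink disks (Lemma~\ref{lem:affine measure}). Li's Theorem~\ref{tao} then supplies laminations realizing every multislope carried by the boundary train tracks --- this is where the interval of multislopes actually comes from, via the weights $x,y$ in Figure~\ref{finaltt} --- and tautness follows by showing the laminations have no compact leaves and filling the complementary product regions (Lemma~\ref{lem:foliation is taut}). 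Your proposal never establishes that the assembled object is a foliation realizing the claimed multislopes, nor supplies a substitute for the sink-disk/laminar-branched-surface step, so the central existence claim remains asserted rather than proved.
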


As an immediate corollary for closed manifolds we therefore have:

\begin{corollary}\label{Maincor}
Let $M = \widehat{M_0}(r^j)$ be the closed manifold obtained from $M_0$ by Dehn filling $M_0$  along the multicurve with rational multislope $(r^j)_{j=1}^k$. When $(r^j)$ is sufficiently close to the multislope of the fibration, $M$  admits a transversely oriented taut foliation.
\end{corollary}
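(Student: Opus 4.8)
The plan is to obtain the foliation on $M$ by applying Theorem~\ref{Main} to $M_0$ and then capping off the Dehn filling solid tori with product-foliated solid tori. As recalled in the introduction, we may take the fiber surface of $M_0$ to have positive genus, so Theorem~\ref{Main} applies to the given fibration of $M_0$: it produces transversely oriented taut foliations of $M_0$ realizing every rational boundary multislope in some open neighborhood $\mathcal{N}$ of the multislope of the fibration. The hypothesis that $(r^j)$ is sufficiently close to the multislope of the fibration means precisely that $(r^j)\in\mathcal{N}$, so we may fix a transversely oriented taut foliation $\mathcal{F}$ of $M_0$ whose boundary multislope is $(r^j)_{j=1}^k$.

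Next I would unpack the boundary condition and glue. By definition, the boundary multislope of $\mathcal{F}$ being the rational multislope $(r^j)$ means that $\mathcal{F}$ is transverse to $\partial M_0$ and restricts on each boundary torus $T_j$ to a foliation by parallel circles of slope $r^j$. Writing $M=\widehat{M_0}(r^j)$ as the manifold obtained by gluing solid tori $V_1,\dots,V_k$ to $M_0$, where the meridian of $V_j$ is identified with a slope-$r^j$ curve on $T_j$, I would extend $\mathcal{F}$ across each $V_j\cong D^2\times S^1$ by the product foliation with leaves $D^2\times\{\mathrm{pt}\}$. The boundary leaves of this product foliation are exactly the slope-$r^j$ circles foliating $T_j$, so $\mathcal{F}$ and the product foliations agree along $\partial M_0$ and assemble into a foliation $\widehat{\mathcal{F}}$ of $M$; choosing the $S^1$-coordinate on each $V_j$ compatibly with the transverse orientation of $\mathcal{F}$ along $T_j$ makes $\widehat{\mathcal{F}}$ transversely oriented.

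Finally I would verify that $\widehat{\mathcal{F}}$ is taut by producing a closed transversal through every leaf. For each $j$, the core circle $\{0\}\times S^1$ of $V_j$ is everywhere transverse to the product foliation, hence is a closed transversal of $\widehat{\mathcal{F}}$; it meets every leaf containing a meridian disk of $V_j$, which includes every newly created disk leaf and every leaf obtained by capping off a leaf of $\mathcal{F}$ that met $\partial M_0$. A leaf of $\widehat{\mathcal{F}}$ meeting none of the $V_j$ is a leaf of $\mathcal{F}$ disjoint from $\partial M_0$, and tautness of $\mathcal{F}$ furnishes a transversal through it: either a closed transversal lying in $M_0\subset M$, or a transverse arc from $\partial M_0$ to $\partial M_0$, which can be closed up into a closed transversal of $\widehat{\mathcal{F}}$ by routing it through the product-foliated solid tori along arcs transverse to the meridian disks. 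Hence every leaf of $\widehat{\mathcal{F}}$ admits a closed transversal, so $\widehat{\mathcal{F}}$ is a transversely oriented taut foliation of $M$.

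Essentially all of the difficulty lies in Theorem~\ref{Main}; the only point in this deduction requiring genuine care is checking that capping off with product-foliated solid tori introduces no dead-end (generalized Reeb) component, which is exactly the transversal bookkeeping carried out in the last paragraph.
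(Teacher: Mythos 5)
Your proposal is correct and is essentially the argument the paper has in mind: the authors state the corollary as an immediate consequence of Theorem~\ref{Main}, the deduction being exactly this capping-off of the boundary foliation by meridian-disk-foliated solid tori, with tautness preserved (indeed, the foliations produced in Lemma~\ref{lem:foliation is taut} have no compact leaves, and capping noncompact leaves with disks keeps them noncompact). The only point to be slightly careful about is your phrase ``by definition\ldots a foliation by parallel circles'': the paper's definition of realizing a boundary multislope only says $\mathcal{F}\cap T^j$ is a foliation of $T^j$ of slope $r^j$, but the foliations actually constructed do restrict to product foliations by parallel circles on each $T^j$ (they arise from measured laminations with rational weights together with product complementary regions), so the meridian disks glue in literally as you describe.
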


Dehn filling $M_0$ along the slope of the fiber gives a mapping torus of a closed surface with the fibration as the obvious taut foliation. The above corollary shows that Dehn filling $M_0$ along slopes sufficiently close to the multislope of the fiber also gives a closed manifold with a taut foliation.

When the surgery coefficients $r^j$ are all meridians, the description of $M$ as a Dehn filling of $M_0$ gives an open book decomposition $(S,h)$ of $M$.  The foliations of Corollary~\ref{Maincor} can be approximated by a pair of contact structures, one positive and one negative, and both naturally related to the contact structure $\xi_{(S,h)}$ compatible with the open book decomposition   $(S,h)$  \cite{ET,KR2}. It follows that  
the contact structure $\xi_{(S,h)}$  is weakly symplectically fillable.

\begin{corollary}\label{contactcor}  Let $M$ have open book decomposition $(S,h)$. Then $M$ is obtained  
by Dehn filling $M_0$ along the multicurve with rational multislope $(r^j)_{j=1}^k$, where the $r^j$ are all meridians. When $(r^j)$ is sufficiently close to the multislope of the fibration, 
$\xi_{(S,h)}$  is weakly symplectically fillable and hence universally tight.
\end{corollary}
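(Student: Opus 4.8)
The plan is to feed the taut foliations supplied by Corollary~\ref{Maincor} into the Eliashberg--Thurston perturbation construction and then invoke Eliashberg's filling theorem. First I would make the topological set-up explicit: $M_0$ is the exterior of the binding in $M$, so $M$ is recovered from $M_0$ by filling each boundary torus along the meridian of the corresponding binding component; this realizes $M$ as $\widehat{M_0}(r^j)$ with the $r^j$ meridional, as asserted. Assuming, as in the hypothesis, that this meridional multislope $(r^j)$ lies close enough to the multislope of the fibration of $M_0$, Corollary~\ref{Maincor} produces a transversely oriented taut foliation $\mathcal{F}$ on $M$. The feature of $\mathcal{F}$ that I would carry along --- visible from the proof of Theorem~\ref{Main} rather than from the statement of the corollary --- is its \emph{shape}: $\mathcal{F}$ is a controlled modification of the page fibration, agreeing with the page foliation outside a small neighborhood of the binding and, inside that neighborhood, spiralling in a prescribed fashion onto the filling solid tori. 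In particular $\mathcal{F}$ is adapted to the open book $(S,h)$ in exactly the sense the perturbation requires.

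Second, I would perturb $\mathcal{F}$ to contact structures. By the Eliashberg--Thurston theorem \cite{ET}, in the form that applies to the foliations built here (cf.\ \cite{KR2}), $\mathcal{F}$ is $C^{0}$-approximated simultaneously by a positive contact structure $\xi_{+}$ and a negative contact structure $\xi_{-}$. Since $\mathcal{F}$ is the controlled modification of the page fibration described above, this perturbation can be performed compatibly with the open book, and the analysis of \cite{ET,KR2} then identifies $\xi_{+}$, up to isotopy, with $\xi_{(S,h)}$ and $\xi_{-}$ with $\overline{\xi_{(S,h)}}$. I expect this identification to be the heart of the argument and its chief obstacle: one must verify that the spiralling prescribed near the binding records precisely the twisting data --- the fractional Dehn twist coefficients of $h$ along the components of $\partial S$ --- that singles out the Giroux-compatible contact structure, so that the perturbation cannot drift into some other contact isotopy class supported by the same $3$-manifold.

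Finally, I would pass from ``contact perturbation of a taut foliation'' to fillability and tightness. By \cite{ET} the contact structure $\xi_{+}$, being a perturbation of the taut foliation $\mathcal{F}$, is weakly symplectically fillable, and hence so is $\xi_{(S,h)}$. By the theorem of Gromov and Eliashberg a weakly symplectically fillable contact structure is tight, and since weak symplectic fillability passes to covers, $\xi_{(S,h)}$ is universally tight.
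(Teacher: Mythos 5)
Your proposal follows essentially the same route as the paper: the paper's argument for this corollary is precisely to take the taut foliations of Corollary~\ref{Maincor}, approximate them by a pair of positive and negative contact structures naturally related to $\xi_{(S,h)}$ via \cite{ET,KR2}, and conclude weak symplectic fillability (hence universal tightness). The identification of the perturbed contact structure with the Giroux-compatible one, which you correctly flag as the delicate step, is exactly what the paper delegates to \cite{ET,KR2} rather than proving in-house.
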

It is very natural to ask whether the qualifier `sufficiently close' can be made precise. 

Honda, Kazez, Matic\cite{HKMRV2} proved that when an open book with connected binding has monodromy with fractional Dehn twist coefficient $c$ at least one then it supports a contact structure which is close to a co-orientable taut foliation. Note that $c\ge 1$ is  sufficient but not always necessary to guarantee that $\xi_{(S,h)}$ is close to a co-orientable taut foliation. 

For an open book with multiple binding components, there is no such global lower  bound on the fractional Dehn twist coefficients  sufficient to guarantee that $\xi_{(S,h)}$ is close to a co-orientable taut foliation. This was shown by  Baldwin and Etnyre\cite{BE}, who constructed a sequence of open books with arbitrarily large fractional Dehn twist coefficients and disconnected binding that support contact structures which are not deformations of a taut foliation. So we can not expect to obtain a  neighborhood around the slope of the fiber which would satisfy our criteria of `sufficiently close' for every open book decomposition. At the end of the  paper, in Section \ref{Example}, we explicitly compute a neighborhood around the multislope of the fiber realizable by our construction for the Baldwin-Etnyre examples.

\section{Preliminaries}\label{prelims}

In this section we introduce basic definitions and fix conventions used in the rest of the paper. 

\subsection{Foliations}

Roughly speaking, a  codimension-1 foliation $\mathcal F$ of a 3-manifold $M$ is a disjoint union of injectively immersed surfaces such that $(M,\mathcal F)$ looks locally like $(\mathbb R^3,\R^2 \times \mathbb  R)$. 

\begin{definition}
Let $M$ be a closed $C^{\infty}$ 3-manifold and let $r$ be a non-negative integer or infinity. 
A \textit{$C^r$ codimension one foliation} $\mathcal F$ of (or in) $M$ is a union of disjoint  connected  surfaces $L_i$, called the \textit{leaves of $\mathcal F$}, in $M$ such that:
\begin{enumerate}
\item $\cup_i L_i = M$, and
\item  there exists a $C^r$ atlas $\mathcal A$ on $M$ which contains all $C^{\infty}$ charts and with respect to which $\mathcal F$ satisfies the following local product structure: 
\begin{itemize}
\item for every $p\in M$, there exists a coordinate chart $(U,(x,y,z))$ in $\mathcal A$ about $p$ such that $U\approx \mathbb R^3$ and the restriction of $\mathcal F$ to $U$ is the union of
planes given by $z = $ constant. 
\end{itemize}
\end{enumerate}
When $r=0$,  require also that the tangent plane field $T\mathcal F$ be $C^0$.
\end{definition}

A  \textit{taut} foliation \cite{Gab} is a codimension-1    foliation of a 3-manifold for which there exists a transverse simple closed curve that has nonempty intersection with each leaf of the foliation.  Although every 3-manifold contains a codimension-1 foliation \cite{Lic,No,Woo}, it is not true that every 3-manifold contains a codimension-1 taut foliation.
In fact, 
 the existence of a taut foliation in a closed orientable 3-manifold has important topological consequences for the manifold. For example, if $M$ is a closed, orientable 3-manifold that has a taut foliation with no sphere leaves then $M$ is covered by $\R^3$ \cite{Pa}, $M$ is irreducible \cite{Ros} and has an infinite fundamental  group \cite{Hae}.  In fact, its fundamental group acts nontrivially on interesting 1-dimensional objects (see, for example, \cite{T, CD} and \cite{Pa, RSS}).
Moreover,  taut foliations can be perturbed to weakly symplectically fillable contact structures \cite{ET} and hence can be used to obtain Heegaard-Floer information \cite{OzSz}.

\subsection{Multislopes}\label{multislope}

Let $F$ be a compact oriented surface of positive genus and with nonempty boundary consisting of $k$ components. Let $h$ be an orientation preserving homeomorphism of $F$ which fixes each boundary component pointwise. 
Let $M=F\times I/(x,1)\sim(h(x),0)$ and denote the $k$ (toral) boundary components of $\partial M$ by $T^1$, $T^2$, ..., $T^k$.

We use the given surface bundle structure on $M$ to fix a coordinate system  on each of the boundary tori as follows. 
(See, for example, Section 9.G of \cite{Rolf} for a definition and description of coordinate system.)
For each $j$ we choose as longitude $\lambda^j=\partial F\cap T^j$, with orientation inherited from the orientation of $F$.  
For each $j$, we then fix as meridian $\mu_j$ an oriented simple closed curve dual to $\lambda_j$. Although, as described in \cite{KR2,R2}, it is possible to use the 
homeomorphism $h$ to uniquely specify such simple closed curves $\mu_j$, we choose not to do so in this paper as all theorem statements are independent of the choice of meridional multislope.

We say a taut foliation $\mathcal{F}$ in $M$ {\it realizes boundary multislope}  $(m^j)_{j=1}^k$ if for each $j$, $1\leq j \leq k$, $\mathcal{F}\cap T^j$ is a foliation of $T^j$ of slope $m^j$ in the chosen coordinate system of $T^j$.

\subsection{Spines and Branched surfaces}

\begin{definition}
A {\it standard spine} \cite{C} is a space $X$ locally modeled on one of the spaces of Figure~\ref{spine}. The {\it critical locus} of $X$ is the 1-complex of points of $X$ where the spine is not locally a manifold.
\end{definition}

\begin{figure}
\centering
\def\svgwidth{0.7\columnwidth}
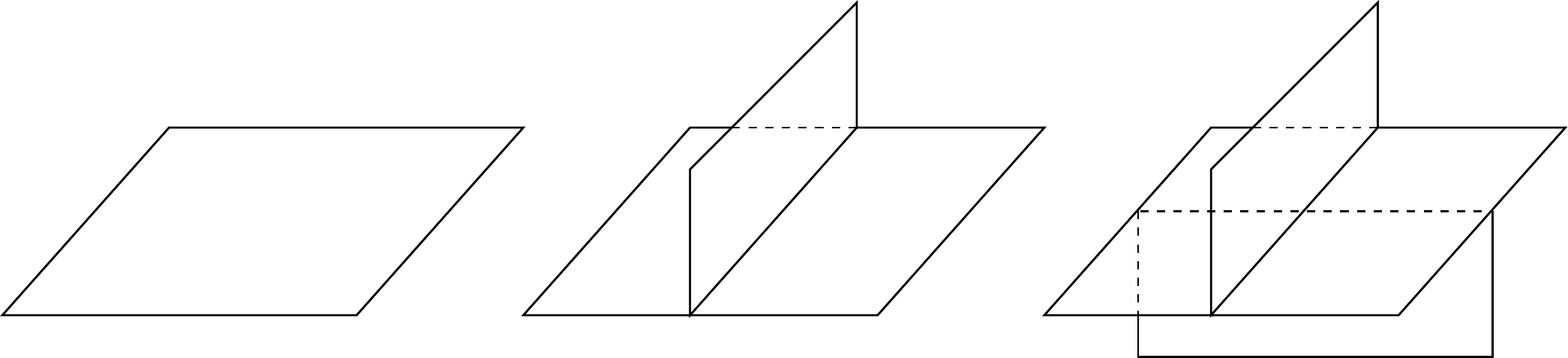
\caption{Local model of a standard spine}\label{spine}
\end{figure}

\begin{definition}
A {\it branched surface} (\cite{W}; see also \cite{O1,O2}) is a space $B$ locally modeled on the spaces of Figure~\ref{brsurf}. The {\it branching locus} $L$ of $B$ is the 1-complex of points of $B$ where $B$ is not locally a manifold.  The components of $B\setminus L$ are called the {\it sectors} of $B$. The points where $L$ is not locally a manifold are called {\it double points} of $L$.  
\end{definition}

\begin{figure}
\centering
\def\svgwidth{0.7\columnwidth}
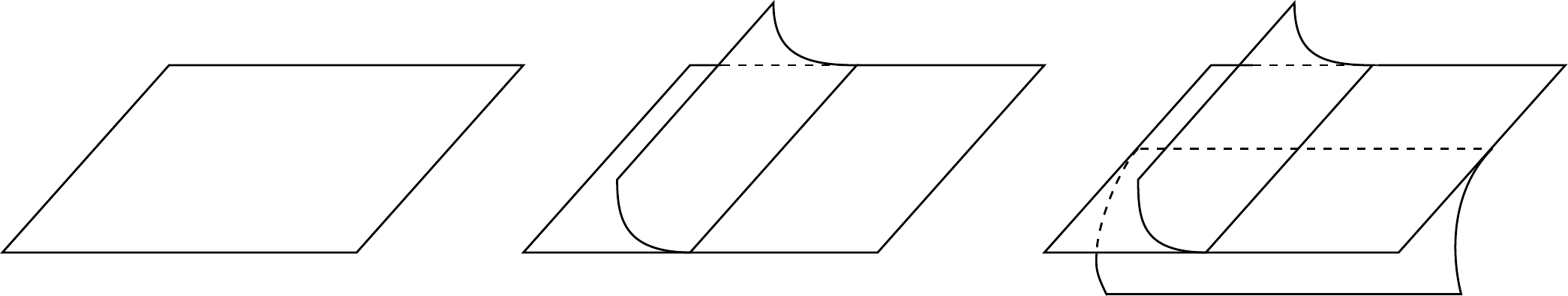
\caption{Local model of a branched surface}\label{brsurf}
\end{figure}

A standard spine $X$ together with an orientation in a neighborhood of the critical locus determines a branched surface $B$ in the sense illustrated in 
Figure~\ref{spinetobrsurf}.
\begin{figure}
\centering
\def\svgwidth{0.7\columnwidth}
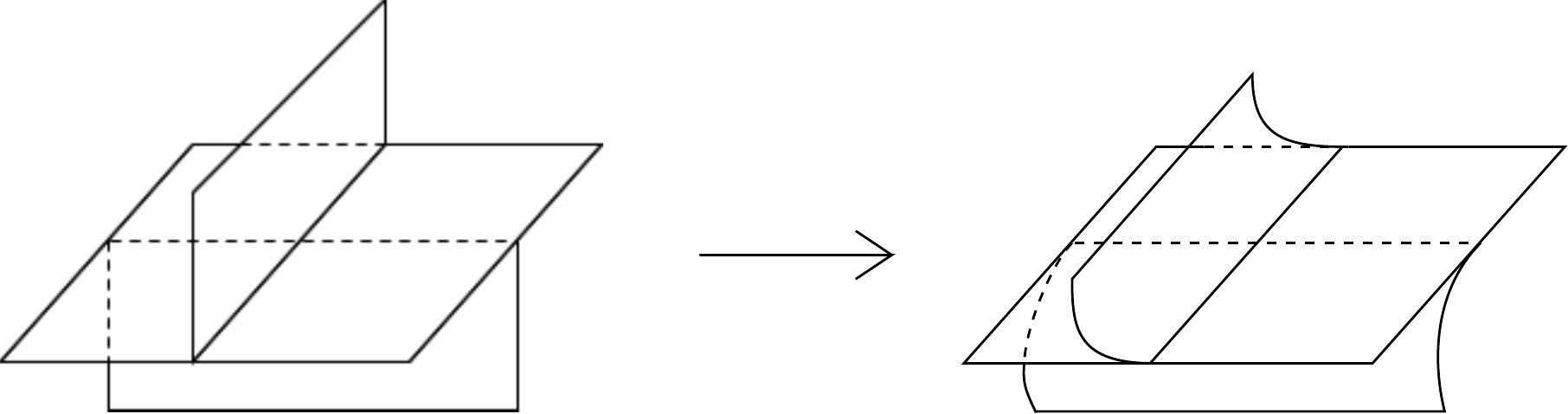
\caption{Oriented spine to oriented branched surface}\label{spinetobrsurf}
\end{figure}

\begin{example}
Let $F_0:= F\times\{ 0 \} $ be a fiber of $M=F\times I/(x,1)\sim(h(x),0)$. Let $\alpha_i$, $1\le i\le k$, be pairwise disjoint, properly embedded arcs in $F_0$, and set $D_i = \alpha_i \times I$ in $M$. Isotope the image arcs $h(\alpha_i)$ as necessary so that the intersection $(\cup \alpha_i)\cap (\cup_i h(\alpha_i))$ 
is transverse and minimal.  Assign an orientation to $F$ and to each $D_i$. Then $X=F_0\cup_i D_i$ is a transversely oriented spine. We will denote by $B = <F;\cup_i  D_i>$ the transversely oriented branched surface associated with $X=F_0\cup_i D_i$.

Similarly, $\langle \cup_i F_i; \cup_{i,j} D_i^j\rangle$ will denote the transversely oriented branched surface associated to the transversely oriented  standard spine 
$$X=F_0\cup F_{1}\cup ...\cup F_{n-1}\cup_{i,j} D_i^j $$ where $F_{i}=F\times \{i/n\}$ and $D_i^j=\alpha_i^j\times [\frac{i}{n},\frac{i+1}{n}]$ for some set of arcs $\alpha_i^j$ properly embedded in $F$ so that the intersection $(\cup_j \alpha_{i-1}^j)\cap (\cup_j \alpha_i^j)$ is transverse and minimal.
\end{example}
 
\begin{definition}
A lamination carried by a branched surface $B$ in $M$ is a closed subset $\lambda$ of an $I$-fibered regular neighborhood $N(B)$ of $B$, such that $\lambda$ is a disjoint union of injectively immersed 2-manifolds (called leaves) that intersect the $I$-fibers of $N(B)$ transversely.
\end{definition}

\subsection{Laminar branched surfaces} 

In \cite{Li0,Li}, Li introduces the  fundamental  notions of  sink disk and  half sink disk. 

\begin{definition} \cite{Li0,Li} Let $B$ be a branched surface in a 3-manifold $M$. Let $L$ be the branching locus of $B$ and let $X$ denote the union of double points of $L$.  Associate to each component of $L\setminus X$ a vector (in $B$) pointing in the direction of the cusp. A  {\it sink disk}  is a disk branch sector $D$  of 
$B$  for which the branch direction of each component of $(L\setminus X)\cap \overline{D}$ points into $D$  (as shown in Figure \ref{sinkdisk}).
A {\it half sink disk} is a sink disk which has nonempty intersection with $\partial M$.
\end{definition}

\begin{figure}
\centering
\def\svgwidth{0.5\columnwidth}
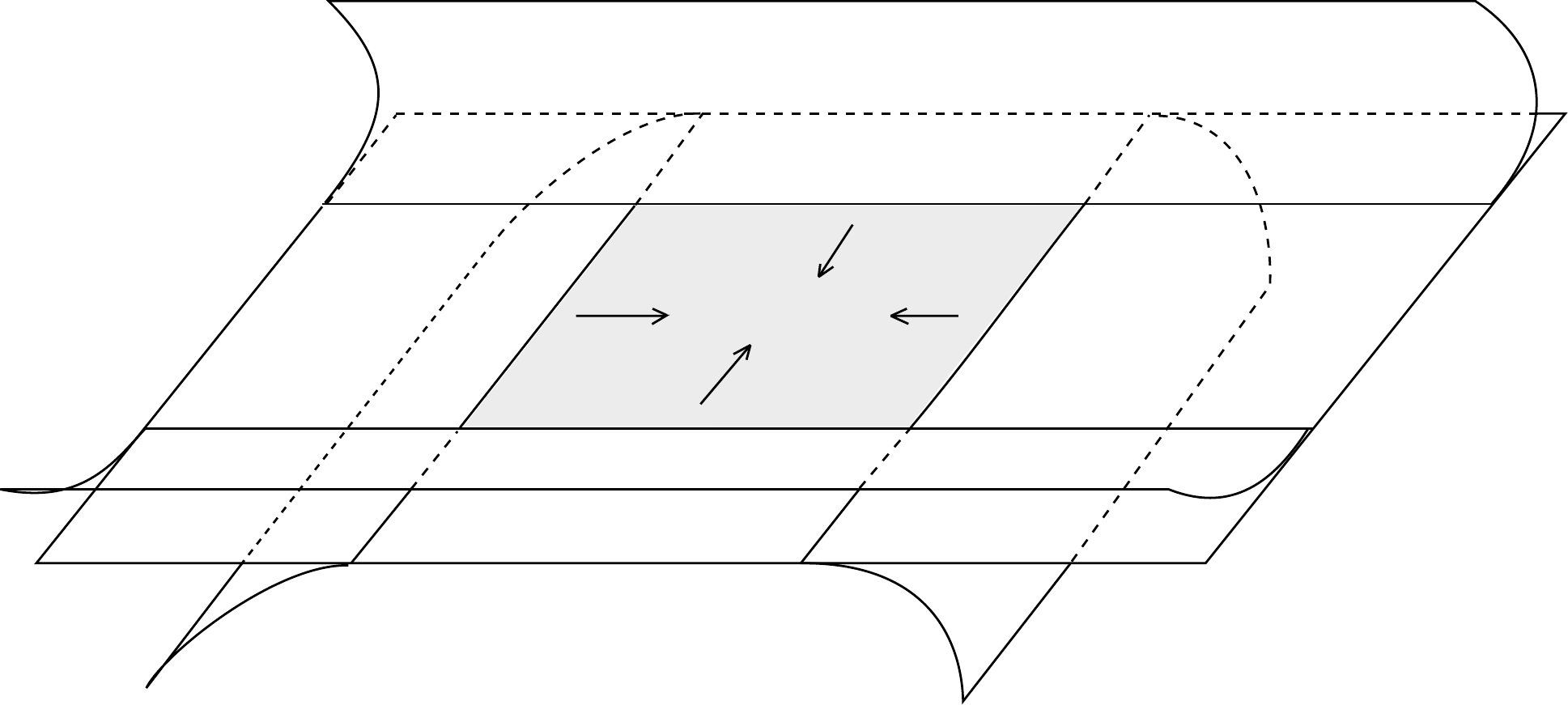
\caption{A sink disk}\label{sinkdisk}
\end{figure}

Sink disks and half sink disks play a key role in Li's notion  of  laminar branched surface.

\begin{definition} (Definition 1.3, \cite{Li0})
Let $D_1$ and $D_2$ be the two disk components of the horizontal boundary of a $D^2 \times I$ region in $M \setminus int(N(B))$. If the projection $\pi: N(B) \to B$ restricted to the interior of $D_1 \cup D_2$ is injective, i.e, the intersection of any $I$-fiber of $N(B)$ with $int(D_1) \cup int(D_2)$ is either empty or a single point, then we say that $\pi(D_1 \cup D_2)$ forms a \emph{trivial bubble} in $B$.
\end{definition}

\begin{definition} (Definition 1.4, \cite{Li0})
A branched surface $B$ in a closed 3-manifold $M$ is called a {\it laminar} branched surface if it satisfies the following conditions:
\begin{enumerate}
\item  $\partial_h N(B)$ is incompressible in $M\setminus int(N(B))$, no component of  $\partial_h N(B)$ is a sphere and $M\setminus B$ is irreducible.

\item There is no monogon in $M\setminus int(N(B))$; i.e., no disk $D\subset M\setminus int(N(B))$
with $\partial D = D\cap N(B) = \alpha\cup\beta$, where $\alpha\subset\partial_v N(B)$ is in an interval fiber of $\partial_v N(B)$ and $\beta\subset \partial_h N(B)$

\item There is no Reeb component; i.e., $B$ does not carry a torus that bounds a solid torus in $M$.
\item $B$ has no trivial bubbles.
\item $B$ has no sink disk or half sink disk.
\end{enumerate}
\end{definition}

Gabai and Oertel introduced essential branched surfaces in \cite{GO} and proved that any lamination fully carried by an essential branched surface is an essential lamination and conversely any essential lamination is fully carried by an essential branched surface. In practice, to check if a manifold has an essential lamination, the tricky part often is to verify that a candidate branched surface does in fact fully carry a lamination.  Li \cite{Li0} uses laminar branched surfaces to relax this requirement and prove the following:

\begin{theorem} (Theorem 1, \cite{Li0})
Suppose $M$ is a closed and orientable 3-manifold. Then
\begin{itemize}
\item[(a)] Every laminar branched surface in $M$ fully carries an essential lamination.
\item[(b)]Any essential lamination in $M$ that is not a lamination by planes is fully carried by a laminar branched surface.
\end{itemize}
\end{theorem}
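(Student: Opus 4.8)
The plan is to prove the two halves by separate arguments, both organized around Gabai--Oertel's characterization of essential laminations via essential branched surfaces, with the ``sink disk'' conditions doing the new work.

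For part (a), I would start from a laminar branched surface $B$ with fibered neighborhood $N(B)$, horizontal boundary $\partial_h N(B)$, and vertical boundary $\partial_v N(B)$, and build the lamination by an infinite nested sequence of splittings of $B$ along its branching locus $L$, each splitting passing to a thinner fibered neighborhood. Assigning to each component of $L\setminus X$ its branch-direction vector, one sees that a splitting move is available across any sector that is either not a disk or is a disk with at least one outward-pointing branch curve --- that is, across any sector that is not a sink disk. Condition (5) (no sink disk, no half sink disk) guarantees that such a move exists somewhere at every stage, so the process never stalls globally; arranging the order of splittings so that every branch curve is eventually split, the nested intersection $\lambda = \bigcap_n N_n(B)$ is a closed subset of $N(B)$ transverse to the $I$-fibers, and a standard argument shows $\lambda$ is a lamination fully carried by $B$.

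Still within part (a), I would then check that $\lambda$ is essential in the Gabai--Oertel sense: absence of sphere leaves and incompressibility of leaves come from condition (1), together with the fact that a compressing disk or sphere would survive the splittings; end-incompressibility and the absence of monogons come from condition (2); irreducibility of $M\setminus\lambda$ and the absence of a Reeb component come from conditions (1) and (3); and condition (4) rules out the degenerate possibility that $\lambda$ collapses onto a surface bounding a product region one could compress across. Gabai--Oertel then yield that $\lambda$ is an essential lamination. For part (b), given an essential lamination $\Lambda$ that is not a lamination by planes, I would first invoke Gabai--Oertel to obtain an essential branched surface $B_0$ fully carrying $\Lambda$, arrange conditions (1)--(4) by a preliminary isotopy and compression, and then remove sink disks and half sink disks one at a time. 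For a sink disk $D$, the carried copy of $D$ in $N(\Lambda)$ pulls back to a union of compact subsurfaces of leaves; because $\Lambda$ is not a lamination by planes, the incompressible leaves meeting $D$ are either not simply connected or recurrent, which produces a properly embedded arc or curve in leaves along which to split $B_0$ without changing the carried lamination while strictly decreasing a complexity counting sink disks weighted by sector Euler characteristic. Iterating eliminates all sink and half sink disks, and a final verification that (1)--(4) persist yields the required laminar branched surface.

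The main obstacle is the convergence-without-stalling claim in part (a) --- that absence of sink disks really does drive the splitting process to a genuine lamination with controlled complementary regions rather than trapping it --- together with the dual sink-disk elimination in part (b), where the hypothesis ``not a lamination by planes'' must be used in an essential way (a lamination by planes genuinely cannot be carried by a laminar branched surface, so the hypothesis is sharp). Keeping all five laminar conditions under simultaneous control through these splittings is where the bulk of the work lies.
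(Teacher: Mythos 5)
This statement is not proved in the paper at all: it is Li's Theorem~1, imported verbatim by citation to \cite{Li0}, so there is no in-paper argument to compare your proposal against. Judged on its own terms, your outline does track the actual strategy of Li's proof: part (a) is indeed ``no sink disks $\Rightarrow$ an infinite splitting process never stalls, and the nested intersection of fibered neighborhoods is a fully carried lamination,'' after which essentiality follows from conditions (1)--(3) via the Gabai--Oertel theorem \cite{GO} that a lamination fully carried by an essential branched surface is essential; and part (b) does start from the Gabai--Oertel converse and then splits away sink disks using the hypothesis that the lamination is not by planes.

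That said, as a proof this has two genuine gaps, both of which you partly flag yourself. First, in (a) it is not enough that a splitting move ``exists somewhere at every stage'': a splitting can create new sink disks, so the real content is a lemma that splittings can always be chosen to preserve the no-sink-disk condition while making progress, together with an argument that the inverse limit is a genuine lamination that is \emph{fully} carried (every $I$-fiber meets it) rather than degenerating. Your sketch asserts convergence but supplies no mechanism for either point. Second, in (b) the role of ``not a lamination by planes'' is left at the level of a plausibility claim (``not simply connected or recurrent, which produces an arc to split along''), and the asserted complexity that strictly decreases under splitting is not defined precisely enough to check termination. Since these two steps are exactly where the theorem lives, the proposal is a correct plan rather than a proof; for the purposes of the present paper, the appropriate move is simply to cite \cite{Li0}, as the authors do.
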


In \cite{Li}, Li notices that if a branched surface has no half sink disk, then it can be arbitrarily split in a neighborhood of its boundary train track without introducing any sink disk (or half sink disk).
He is therefore able to conclude the following.

\begin{theorem} (Theorem 2.2, \cite{Li}) \label{tao}
Let $M$ be an irreducible and orientable 3-manifold whose boundary is a union of incompressible tori.
Suppose $B$ is a laminar branched surface and $\partial M\setminus \partial B$ is a union of bigons. 
Then, for any  multislope $(s_1,...,s_k) \in (\mathbb Q\cup \{\infty\})^k$ that can be realized by the train track $\partial B$, if $B$ does not carry a torus that bounds a solid torus in $\widehat{M}(s_1,...,s_k)$, then
$B$ fully carries a lamination $\lambda_{(s_1,...,s_k)}$ whose boundary consists of the multislope
$(s_1,...,s_k)$ and $\lambda_{(s_1,...,s_k)}$ can be extended to an essential lamination in $\widehat{M}(s_1,...,s_k)$.
\end{theorem}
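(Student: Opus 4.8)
The plan is to deduce this statement from Theorem~1(a) of \cite{Li0} (every laminar branched surface in a closed orientable $3$--manifold fully carries an essential lamination) by manufacturing, out of $B$ and the chosen multislope, a laminar branched surface $\widehat{B}$ in the closed manifold $\widehat{M}(s_1,\dots,s_k)$ that carries a lamination with the prescribed boundary behaviour. The only ingredient beyond Theorem~1(a) is the splitting observation recalled just before the statement: a branched surface with no half sink disk may be split inside a collar of $\partial M$ without ever creating a sink disk or a half sink disk.

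First I would normalize the boundary train track. Since $(s_1,\dots,s_k)$ is realized by $\partial B$, for each $j$ choose a multicurve $c_j$ of slope $s_j$ on $T^j$ carried by $\tau_j:=\partial B\cap T^j$. Using that $B$ has no half sink disk, split $B$ inside a collar $\partial M\times[0,1)$ to obtain a branched surface $B'$ whose boundary train track $\partial B'\cap T^j$ carries $c_j$ and still has only bigon complementary regions in $T^j$. By the splitting observation $B'$ has no sink or half sink disk, and the remaining clauses in the definition of laminar branched surface (incompressibility of the horizontal boundary, no monogon, no Reeb component, no trivial bubble, irreducibility of the complement) are untouched by a splitting supported near $\partial M$, since that operation only adds product regions to $M\setminus \mathrm{int}\,N(B)$. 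Hence $B'$ is again laminar, and every lamination carried by $B'$ is carried by $B$.

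Next I would cap off inside the filling solid tori. Write $\widehat{M}=\widehat{M}(s_1,\dots,s_k)=M\cup_{j}V_j$, where $V_j$ is the filling solid torus with meridian $c_j$. Because $T^j\setminus(\partial B'\cap T^j)$ is a union of bigons and $\partial B'\cap T^j$ carries the meridian $c_j$ of $V_j$, there is a standard way to extend $B'$ across each $V_j$ by a branched surface $W_j\subset V_j$ all of whose sectors are meridian disks of $V_j$ and with $W_j\cap\partial V_j=\partial B'\cap T^j$; set $\widehat{B}=B'\cup\bigcup_j W_j$. I claim $\widehat{B}$ is a laminar branched surface in $\widehat{M}$: each new sector is a disk whose branch directions along its boundary point out of the disk into $V_j$, so \emph{precisely because} $B$, hence $B'$, has no half sink disk, no sector of $\widehat{B}$ is a sink disk; each $V_j\setminus\mathrm{int}\,N(W_j)$ is a union of $D^2\times I$ regions, so clauses (1), (2) and (4) pass from $B'$ to $\widehat{B}$; and clause (3) is exactly the hypothesis that $B$ carries no torus bounding a solid torus in $\widehat{M}(s_1,\dots,s_k)$, because any torus carried by $\widehat{B}$ is isotopic in $\widehat{M}$ either to one carried by $B$ or to an inessential torus. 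Then Theorem~1(a) of \cite{Li0} applies: $\widehat{B}$ fully carries an essential lamination $\widehat\lambda$ in $\widehat{M}$. Its restriction $\lambda:=\widehat\lambda\cap M$ is carried by $B'$, hence by $B$, and since $\widehat\lambda$ can meet the sectors of each $W_j$ only along meridian disks, $\lambda\cap T^j$ is a lamination of $T^j$ by curves of slope $s_j$; so $\lambda=\lambda_{(s_1,\dots,s_k)}$ has boundary multislope $(s_1,\dots,s_k)$ and extends, via $\widehat\lambda$, to an essential lamination of $\widehat{M}(s_1,\dots,s_k)$. That $B$ \emph{fully} carries $\lambda$ follows because the splitting producing $B'$ is local and can be chosen so that each branch sector of $B$ still receives a full sector of $B'$.

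The main obstacle is the third step: verifying that the capped-off branched surface $\widehat{B}$ is genuinely laminar. Concretely, one must check that attaching families of meridian disks in the solid tori introduces no sink disk or half sink disk — this is where the no--half--sink--disk hypothesis on $B$ is indispensable, and where the hypothesis ``$\partial M\setminus\partial B$ is a union of bigons'' is used to guarantee that the extension by disk sectors is monogon-free — and that the absence of Reeb components for $\widehat{B}$ is equivalent to the stated condition on tori bounding solid tori in $\widehat{M}(s_1,\dots,s_k)$. The rest of the argument is bookkeeping about how splitting and the explicit product structure in $V_j$ interact with the definition of a laminar branched surface.
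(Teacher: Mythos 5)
This statement is Theorem~2.2 of \cite{Li}, which the paper does not reprove: it only remarks that Li's argument extends verbatim to several boundary components because a branched surface with no half sink disk can be split in a collar of $\partial M$ to realize the multislope without creating sink disks, after which capping off yields a laminar branched surface in $\widehat{M}(s_1,\dots,s_k)$ to which Theorem~1 of \cite{Li0} applies. Your proposal is exactly this argument fleshed out, so it takes essentially the same approach; the only imprecision is that the collar splitting should reduce each boundary train track to curves carrying \emph{only} the slope $s_j$ (not merely a track that carries $c_j$ among other slopes), since otherwise the extension $W_j$ by meridian disks with $W_j\cap\partial V_j=\partial B'\cap T^j$ does not exist.
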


We note that in \cite{Li}, Li states Theorem 2.2 only for the case that $\partial M$ is connected. However, as Li has observed and is easily seen, his proof extends immediately to the case that $\partial M$ consists of multiple toral boundary components.  Key is the fact that splitting $B$ open, to a branched surface $B'$ say, in a neighborhood of its boundary so that $\partial B'$ consists of multislopes $(s_1,...,s_k)$, does not  introduce sink disks. Therefore, capping
$B'$ off to $\widehat{B'}$ yields a laminar branched surface in $\widehat{M}(s_1,...,s_k)$.

\subsection{Good oriented sequence of arcs}
In this section we introduce some definitions that will be used in the rest of the paper.

\begin{definition}
Let $(\alpha^{1},...,\alpha^{k})$ be a tuple of pairwise disjoint simple arcs properly
embedded in $F$ with $\del\alpha^{j}\subset T^{j}$. Such a tuple
will be called \emph{parallel} if $F\setminus\{\alpha^{1},...,\alpha^{k}\}$
has $k$ components, $k-1$ of which are annuli $\{A^{j}\}$ with
$\del A^{j}\supset\{\alpha^{j},\alpha^{j+1}\}$ and one of which is a surface
$S$ of genus $g-1$ with $\del S\supset\{\alpha^{1},\alpha^{k}\}$.
Furthermore all $\alpha^{j}$ are oriented in parallel, i.e., the orientation
of $\del A^{j}$ agrees with $\{-\alpha^{j},\alpha^{j+1}\}$ and the orientation
of $\del S$ agrees with $\{-\alpha^{k},\alpha^{1}\}$.  Note that,
in particular, each $\alpha^{j}$ is non-separating. See Figure~\ref{Parallel tuple} for an example of a parallel tuple.
\end{definition}

\begin{figure}
\centering
\def\svgwidth{0.7\columnwidth}
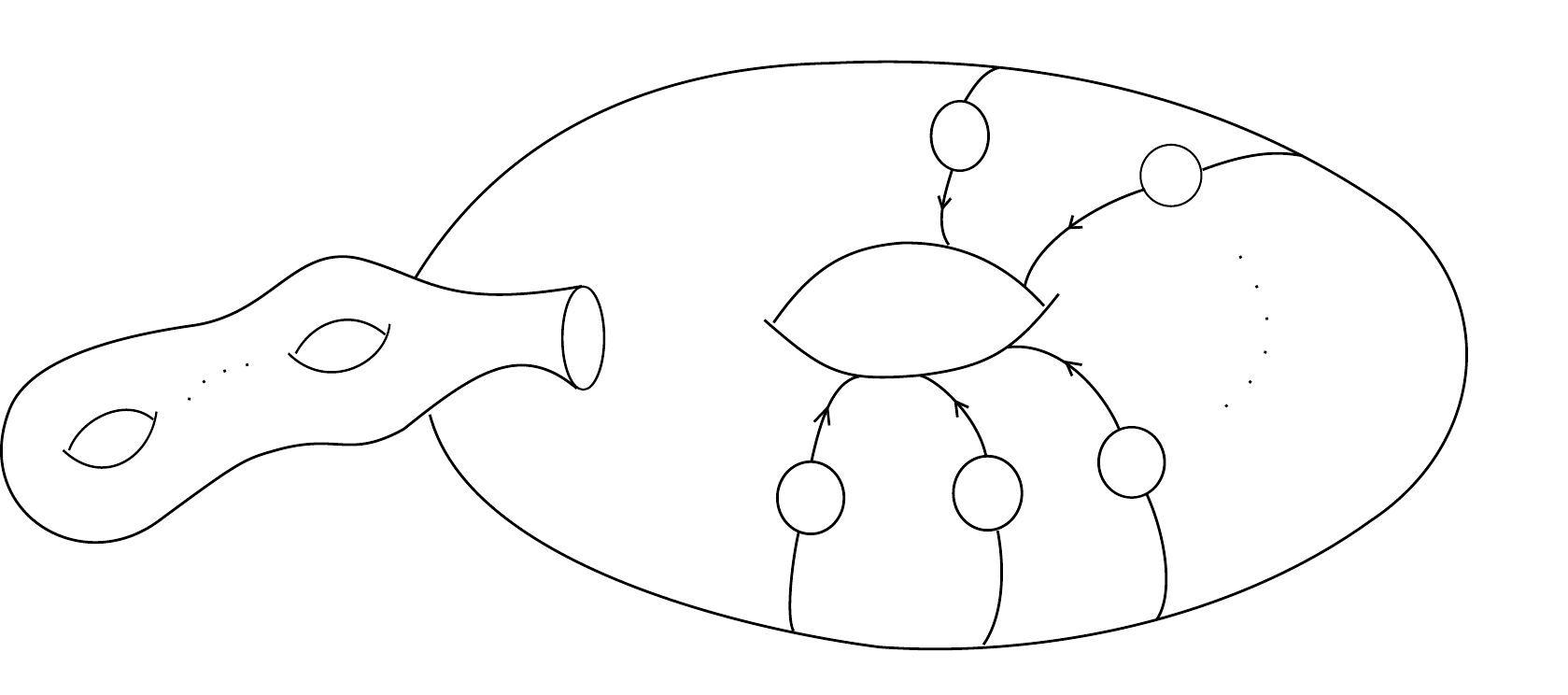
\caption{A Parallel tuple $(\alpha^i)$ on the surface $F$}\label{Parallel tuple}
\end{figure}

\begin{definition}
A pair of tuples $(\alpha^{i})_{i=1...k}$ and $(\beta^{j})_{j=1...k}$
will be called \emph{good} if both are parallel tuples and $\alpha^{i}$
and $\beta^{j}$ have exactly one  (interior) point of intersection when $i\neq j$ while $\alpha^{i}$
is disjoint from $\beta^{j}$ when $i=j$.

A sequence of parallel tuples $$\sigma=((\alpha_{0}^{1},\alpha_{0}^{2},...,\alpha_{0}^{k}),(\alpha_{1}^{1},\alpha_{1}^{2},...,\alpha_{1}^{k}),..., (\alpha_{n}^{1},\alpha_{n}^{2},...,\alpha_{n}^{k}))$$
also shortened to $$((\alpha_{0}^{j}),(\alpha_{1}^{j}),...,(\alpha_{n}^{j}))$$
or $$(\alpha_{0}^{j})\xrightarrow{{\sigma}}(\alpha_{n}^{j})$$ will
be called \emph{good} if for each fixed $j$, $1\leq j \leq k$,  the pair
$((\alpha_{i}^{j}),(\alpha_{i+1}^{j}))$ is good.
\end{definition}

\begin{figure}
\centering
\def\svgwidth{0.5\columnwidth}
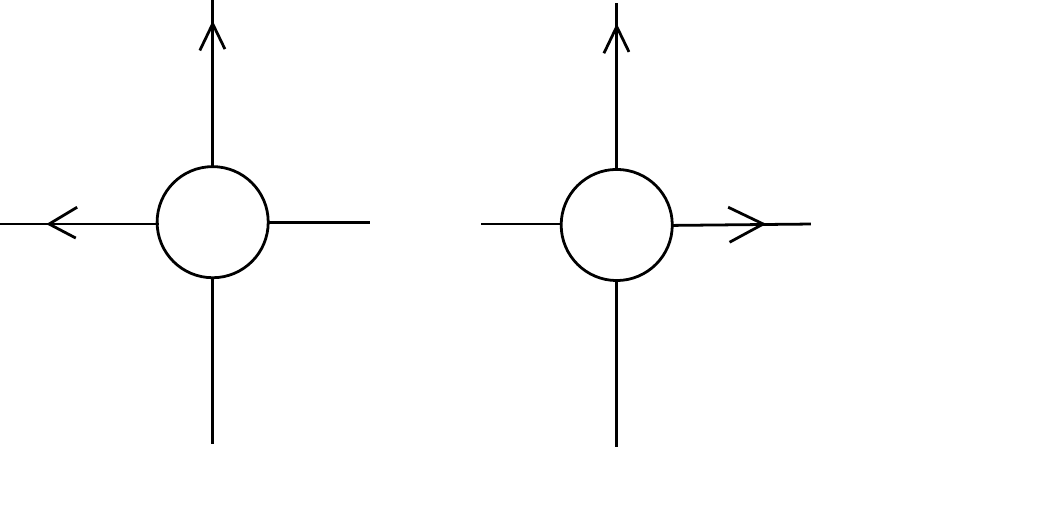
\caption{A pair of arcs $(\alpha,\beta)$ in position (a) is called negatively oriented, while a pair $(\alpha,\beta)$ in position (b) is called positively oriented}\label{Orientation}
\end{figure}

\begin{definition}
We say a good pair $((\alpha^{j}),(\beta^{j}))$ is \emph{positively
oriented} if for each $j\in\{1,...,k\}$ a neighborhood of the $j$-th
boundary component in $F$ is as shown in Figure~\ref{Orientation} (b). Similarly we say a good pair $((\alpha^{j}),(\beta^{j}))$ is \emph{negatively oriented} if for each
$j\in\{1,...,k\}$ a neighborhood of the $j$-th boundary component in $F$ is as shown in Fig~\ref{Orientation} (a). 

We say a good sequence $\sigma=((\alpha_{0}^{j}),(\alpha_{1}^{j}),...,(\alpha_{n}^{j}))$
is positively oriented if each pair $((\alpha_{i}^{j}),(\alpha_{i+1}^{j}))$
is positively oriented. Similarly we say $\sigma=((\alpha_{1}^{j}),(\alpha_{2}^{j}),...,(\alpha_{n}^{j}))$
is negatively oriented if each pair $((\alpha_{i}^{j}),(\alpha_{i+1}^{j}))$
is negatively oriented. We say the sequence $\sigma$ is oriented
if it is positively or negatively oriented. See Figure~\ref{spinewithgoodpair} for an example of a negatively oriented good pair in $F$.
\end{definition}

\subsection{Preferred generators}
Let 	$$\mathcal{H}_{g,k}=\{\eta_{1},\eta_{2},...,\eta_{2g-2+k,},\gamma_{12}, \gamma_{24}, \gamma_{46}, \gamma_{68},...,$$ $$\gamma_{2g-4,2g-2},\beta,\beta_{1},\beta_{2},...,\beta_{g-1},\delta_{1},\delta_{2},..., \delta_{k-1}\}$$ be the curves on  $F$ as shown in Figure~\ref{Gervais}. Then combining Proposition 1 and Theorem 1 of Gervais \cite{Ge} the mapping class group $MCG(F,\del F)$ of $F$ (fixing boundary) is generated by Dehn twists about curves in $\mathcal{H}_{g,k}$. 
\begin{quotthm}
[Gervais] The mapping class group $MCG(F, \del F)$ of $F$ is generated by Dehn twists about the curves in $\mathcal{H}_{g,k}$.
\end{quotthm}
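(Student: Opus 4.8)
The plan is to obtain this from the classical fact that the mapping class group is generated by Dehn twists, upgraded by Gervais to an explicit finite presentation whose generators are twists about the curves of $\mathcal{H}_{g,k}$; so the real work is a reduction to this short list. First I would invoke the Dehn--Lickorish theorem in the form valid for a compact orientable surface with boundary (see, e.g., Farb--Margalit, \emph{A Primer on Mapping Class Groups}): $MCG(F,\del F)$ is generated by Dehn twists about simple closed curves in $F$, and, since $F$ has positive genus, by a \emph{finite} such collection (a Humphries-type system). This reduces the claim to showing that the Dehn twist $t_c$ about an arbitrary simple closed curve $c\subset F$ lies in the subgroup $G\le MCG(F,\del F)$ generated by the twists about the curves of $\mathcal{H}_{g,k}$.

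Next I would process $c$ handle by handle. Using the braid relation $t_a t_b t_a = t_b t_a t_b$ for curves meeting once, together with the change-of-coordinates principle, a twist about any curve running through a single handle of $F$ is conjugate, by an element already built from twists about the $\eta_j$, to one of the generators in $\mathcal{H}_{g,k}$; the curves $\gamma_{12},\gamma_{24},\ldots,\gamma_{2g-4,2g-2}$ and $\beta,\beta_1,\ldots,\beta_{g-1}$ are precisely what is needed to move twists from one handle to the next, so iterating shows every twist about a non-separating curve disjoint from $\del F$ lies in $G$. The boundary is then handled by the lantern relation applied to four-holed spheres embedded in $F$ (available because $g\ge 1$ leaves room to form them): each $\delta_i$, and each boundary-parallel twist, is rewritten as a product of twists about curves already shown to be in $G$, after which the remaining non-separating and separating curves of $F$ are reached by one further round of change-of-coordinates moves and the braid relation. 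Hence $G=MCG(F,\del F)$. This is exactly the combination of Gervais's Proposition~1, which singles out the economical generating list $\mathcal{H}_{g,k}$, with his Theorem~1, which provides the finite presentation on these generators; for the verification of the relations I would simply cite \cite{Ge}.

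The step I expect to be the main obstacle is exactly the passage to the \emph{small} list $\mathcal{H}_{g,k}$ rather than a larger, more symmetric family of twist curves: showing that the specific curves $\gamma_{2i,2i+2}$, $\beta$, the $\beta_i$, the $\delta_i$ and the $\eta_j$ already suffice requires a careful, essentially combinatorial, sequence of applications of the lantern and chain relations to eliminate redundant generators, and this bookkeeping is the technical heart of Gervais's argument. Since only the generation statement is needed in what follows, I would not reproduce it here.
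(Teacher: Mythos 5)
This statement is quoted verbatim from Gervais: the paper offers no proof of its own, simply combining Proposition~1 and Theorem~1 of \cite{Ge}, and your proposal, after sketching a plausible Dehn--Lickorish reduction via braid, chain, and lantern relations, likewise defers the substantive combinatorial work to the same citation. So you are taking essentially the same route as the paper; just be aware that your intermediate sketch is not itself a complete argument (the reduction to the specific short list $\mathcal{H}_{g,k}$ is exactly the hard part), but since the authoritative reference is \cite{Ge} in both cases, nothing further is required.
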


\begin{figure}
\centering
\def\svgwidth{\columnwidth}
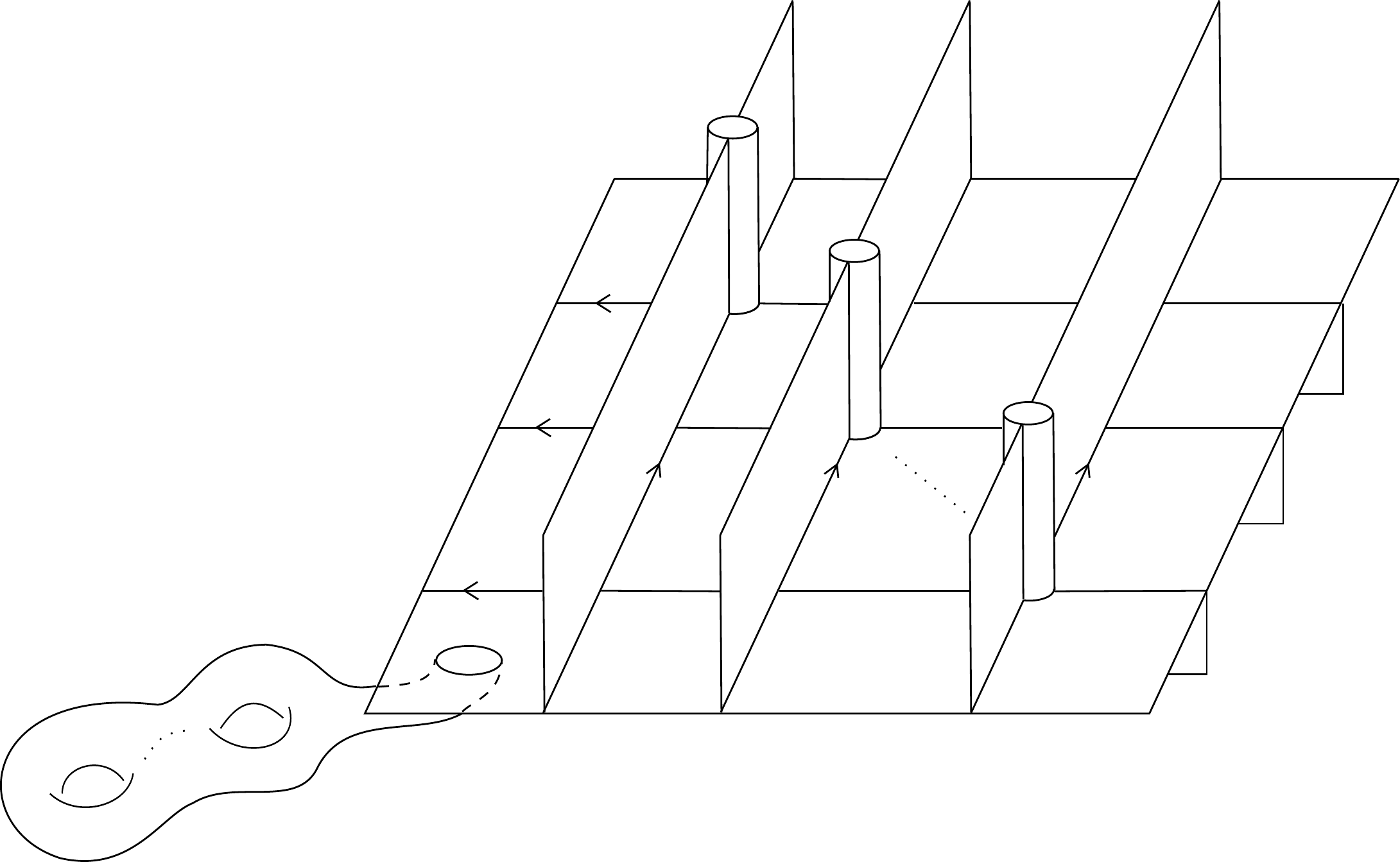
\caption{Neighborhood of $F$ with a good negatively oriented pair $((\alpha^j),(\beta^j))$ in the oriented spine $X$}\label{spinewithgoodpair}
\end{figure}

As Dehn twists about $\delta_{i}$ are isotopic to the identity via
an isotopy that does not fix the boundary, we have the following obvious corollary:
\begin{corollary}\label{Gervaisthm}
The mapping class group $MCG(F)$ of $F$ (not fixing the boundary pointwise) is generated by Dehn twists about the curves in $$\mathcal{H}_{g,k}'=\mathcal{H}_{g,k}\setminus\{\delta_{1},...,\delta_{k-1}\}$$
\end{corollary}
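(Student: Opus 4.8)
The plan is to derive the corollary from Gervais's theorem by transporting its generating set along the natural homomorphism
$$\Phi\co MCG(F,\del F)\longrightarrow MCG(F)$$
induced by including the group of homeomorphisms of $F$ fixing $\del F$ pointwise into the group of orientation preserving homeomorphisms fixing each boundary component $T^j$ setwise (with isotopies in the target likewise allowed to move points of $\del F$). The first step is to observe that $\Phi$ is surjective. Given $[\phi]\in MCG(F)$, the restriction of $\phi$ to each circle $T^j$ is an orientation preserving self-homeomorphism of $S^1$, hence isotopic to the identity; performing these circle isotopies simultaneously inside disjoint collar neighborhoods of the $T^j$ replaces $\phi$ by an isotopic homeomorphism that is the identity on all of $\del F$, and this representative lies in the image of $\Phi$.

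The remaining work is bookkeeping. By Gervais's theorem, $MCG(F,\del F)$ is generated by $\{T_c : c\in\mathcal H_{g,k}\}$, so by surjectivity $MCG(F)$ is generated by $\{\Phi(T_c): c\in\mathcal H_{g,k}\}$. Since each $T_c$ is supported in an annular neighborhood of $c$, the image $\Phi(T_c)$ is simply the Dehn twist about $c$ regarded as an element of $MCG(F)$. By the observation preceding this corollary, the Dehn twists about $\delta_1,\dots,\delta_{k-1}$ are trivial in $MCG(F)$ — each $\delta_i$ is parallel to a boundary component, and a boundary Dehn twist unwinds along that component's collar once the boundary may rotate — and so these generators may be discarded. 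Hence $MCG(F)$ is generated by the Dehn twists about the curves of $\mathcal H_{g,k}'=\mathcal H_{g,k}\sm\{\delta_1,\dots,\delta_{k-1}\}$.

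There is no substantial obstacle here; the authors are right to call this an obvious corollary. The only points deserving a word of care are, first, fixing the meaning of $MCG(F)$ precisely enough — each $T^j$ is preserved setwise but points of $\del F$ may move, both for homeomorphisms and for isotopies — so that both the surjectivity of $\Phi$ and the vanishing $\Phi(T_{\delta_i})=1$ are valid; and second, recalling that we only claim $\mathcal H_{g,k}'$ to be a generating set and not a minimal one, so that any further relations or redundancies among the images $\Phi(T_c)$ (for example, any that let Gervais use only $k-1$ of the boundary-parallel curves) are immaterial. Granting these, the corollary follows immediately from Gervais's theorem together with the preceding observation.
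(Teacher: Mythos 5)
Your proof is correct and takes essentially the same route as the paper, which justifies the corollary in a single sentence by noting that the Dehn twists about the boundary-parallel curves $\delta_i$ are isotopic to the identity once isotopies are allowed to move $\del F$. The only content you add is an explicit verification that the forgetful map $MCG(F,\del F)\to MCG(F)$ is surjective, which the paper leaves implicit.
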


\section{Main Theorem}

\begin{definition}\label{orientation}
Let $(\alpha^{1},...,\alpha^{k})$ be a parallel tuple in $F$. Orient
$F$ so that the normal vector $\hat{n}$ induced by the orientation of
$M$ points in the  direction of increasing $t\in[0,1]$. Let $D^{j}=\alpha^{j}\times[0,1]$
in $M_{h}$ with the orientation induced by orientations of $\alpha^{j}$
and $F$; i.e., if $v^{j}$ is tangent to $\alpha^{j}$ then $(v^{j},\hat{n})$
gives the orientation of $D^{j}$.  Let $X=F\cup_{j}D^{j}$ be an oriented
standard spine and $B_{\alpha}=<F;\cup_jD^{j}>$ the transversely oriented branched  surface associated with
$X$. 
\end{definition}

\begin{figure}
\centering
\def\svgwidth{0.8\columnwidth}
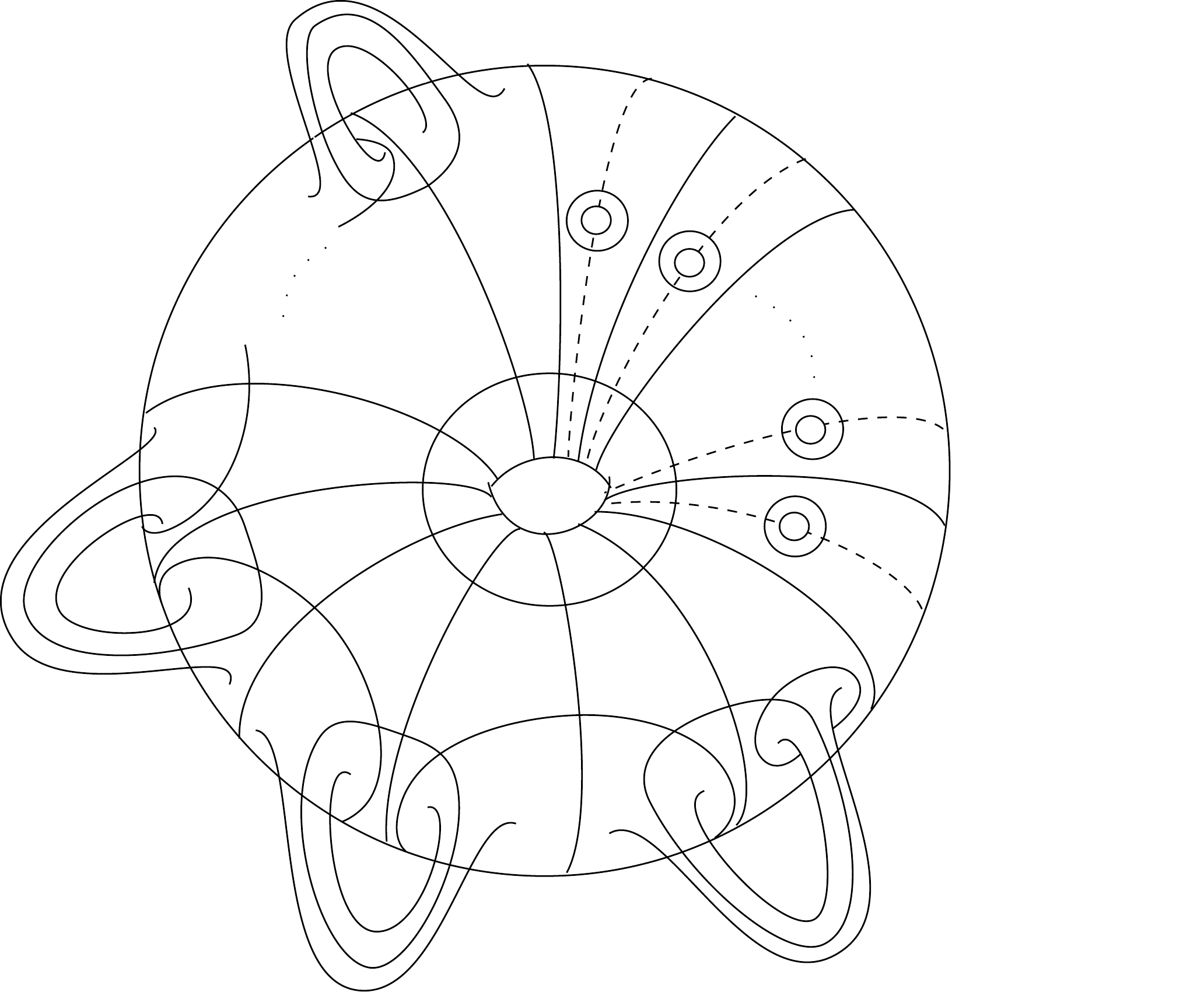
\caption{Generators of the Mapping Class Group}\label{Gervais}
\end{figure}

Notice that the multislope of the fibration is $\bar{0}$. In order to prove Theorem \ref{Main} we shall prove the following:

\begin{theorem}\label{detailed main} There is an open neighborhood $\mathcal U$ of $\bar{0}\in\mathbb R^k$ such that 
for each point $(m^{1},...,m^{k})\in \mathcal U\cap \mathbb Q^k$,
there exists a lamination carried by $B_{\alpha}$ with
boundary multislope $(m^{j})$. These laminations extend to taut
foliations which also intersect the boundary in foliations with multislope $(m^j)$.
\end{theorem}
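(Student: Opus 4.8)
The plan is to realize the monodromy $h$ by a \emph{good oriented sequence of parallel tuples}, to prove that the branched surface built from such a sequence is laminar with no half sink disk, to invoke Li's Theorem~\ref{tao} to produce the laminations, and finally to spread each lamination across its complementary regions to obtain a taut foliation.

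First I would use Corollary~\ref{Gervaisthm} to write $h$, up to isotopy, as a product $\tau_{n}\cdots\tau_{1}$ of Dehn twists about curves in $\mathcal{H}'_{g,k}$. Beginning from the fixed parallel tuple $(\alpha^{1},\dots,\alpha^{k})$, I would build, one twist at a time, a good oriented sequence $\sigma=((\alpha_{0}^{j}),(\alpha_{1}^{j}),\dots,(\alpha_{n}^{j}))$ of parallel tuples in which the $i$-th consecutive pair $((\alpha_{i-1}^{j}),(\alpha_{i}^{j}))$ is good and absorbs $\tau_{i}$, with $(\alpha_{0}^{j})=(\alpha^{j})$ and $(\alpha_{n}^{j})=h(\alpha^{j})$ up to isotopy rel $\partial F$; this can be carried out with all consecutive pairs positively oriented, and, mirroring, negatively oriented. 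From $\sigma$ and the intermediate fibers $F_{i}=F\times\{i/n\}$ one forms the transversely oriented spine $F_{0}\cup\dots\cup F_{n-1}\cup_{i,j}D_{i}^{j}$ of the Example, with $D_{i}^{j}=\alpha_{i}^{j}\times[i/n,(i+1)/n]$, and hence the transversely oriented branched surface $B_{\sigma}=\langle\cup_{i}F_{i};\cup_{i,j}D_{i}^{j}\rangle$ in $M$. Collapsing the product slabs between consecutive fibers of $B_{\sigma}$ recovers $B_{\alpha}$ and sends any lamination carried by $B_{\sigma}$ to one carried by $B_{\alpha}$, so it suffices to argue with $B_{\sigma}$.

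The crux is to check that $B_{\sigma}$ is a laminar branched surface in Li's sense and that $\partial M\setminus\partial B_{\sigma}$ is a union of bigons. Incompressibility of $\partial_{h}N(B_{\sigma})$, the absence of sphere leaves and of monogons, irreducibility of $M\setminus B_{\sigma}$, the absence of trivial bubbles, and the absence of a carried torus bounding a solid torus should all follow from the product structure along the slabs together with the facts that each $\alpha_{i}^{j}$ is essential and non-separating and that $F$ has positive genus. The substantive condition is the absence of sink disks and half sink disks. Here I would enumerate the branch sectors of $B_{\sigma}$, which by the ``parallel'' and ``good'' hypotheses are bigons, quadrilaterals, and one copy of the genus-$(g-1)$ piece $S$, together with the boundary sectors on the $T^{j}$, and then track the cusp direction along each arc of the branching locus, which lies in the fibers $F_{i}$ and along the tori $T^{j}$. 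The ``oriented'' hypothesis is arranged precisely so that on every disk sector at least one branching arc points outward, so no such sector --- interior or meeting $\partial M$ --- can be a sink disk. I expect this sector-by-sector orientation bookkeeping, carried out near all $k$ boundary tori simultaneously, to be the main obstacle.

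Granting this, the boundary train track $\partial B_{\sigma}$ on each $T^{j}$ realizes an interval of slopes containing $0$; combining the positively and negatively oriented constructions (whose precise realizable intervals are computed in Section~\ref{Example}) yields an open neighborhood $\mathcal{U}$ of $\bar{0}\in\mathbb{R}^{k}$ each of whose rational points is realized by $\partial B_{\sigma}$ for a suitable choice of sequence. For such a multislope $(m^{1},\dots,m^{k})$, Li's Theorem~\ref{tao}, in the multi-component form recorded after its statement, provides a lamination $\lambda_{(m^{j})}$ fully carried by $B_{\sigma}$ --- hence by $B_{\alpha}$ --- with boundary multislope $(m^{j})$ and extending to an essential lamination in $\widehat{M}(m^{j})$; the solid-torus hypothesis of that theorem holds because $B_{\sigma}$ carries no torus bounding a solid torus. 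Finally, since $B_{\sigma}$ is obtained from the fibration by the modifications above, every component of $M\setminus\lambda_{(m^{j})}$ is an $I$-bundle; filling each with its product foliation and matching slopes along the $T^{j}$ produces a transversely oriented foliation $\mathcal{F}_{(m^{j})}$ of $M$ with $\mathcal{F}_{(m^{j})}\cap T^{j}$ of slope $m^{j}$, which is taut because the vertical circle $\{x\}\times S^{1}$ for $x\in\mathrm{int}\,F$ is transverse to every $F_{i}$ and hence meets every leaf. This proves Theorem~\ref{detailed main}, and with it Theorem~\ref{Main} and Corollaries~\ref{Maincor} and~\ref{contactcor}.
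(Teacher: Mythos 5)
Your overall architecture is the same as the paper's: factor $h$ into Gervais twists to produce a good oriented sequence of parallel tuples (Lemma~\ref{lem:Existence of good sequence}), form the split branched surface $B_\sigma$, verify there are no sink disks or half sink disks (Lemma~\ref{lem:affine measure}), apply Li's Theorem~\ref{tao} to the boundary train tracks, and fill the complementary product regions. Two small points of comparison: the paper shows a single positively oriented sequence already yields, on each $T^j$, all slopes in $(-\frac{1}{n},\infty)$ in suitable coordinates, hence a neighborhood of $\bar{0}$ after the change of coordinates --- you do not need to combine the positive and negative constructions. And the sink-disk bookkeeping you defer as ``the main obstacle'' is exactly Lemma~\ref{lem:affine measure} and does go through in a few lines: the coherent co-orientation of $F_{i-1}$ and $F_{i}$ forces each $D_i^j$ to have one boundary arc with outward cusp direction, and parallelism of the tuple $(\alpha_i^j)$ forces each component of $F_i\setminus\{\alpha_i^j\cup\alpha_{i-1}^j\}$ to have one as well.

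The one step that fails as written is tautness. The curve $\{x\}\times S^1$ is not even a closed curve in $M=F\times I/(x,1)\sim(h(x),0)$ unless $h(x)=x$, and, more importantly, a closed transversal passing through a single $I$-fiber of $N(B_\sigma)$ has no reason to meet \emph{every} leaf of $\mathcal{F}_{(m^j)}$: full carrying says every $I$-fiber meets the lamination, not that every leaf meets every $I$-fiber, and leaves may accumulate far from any chosen transversal. The paper instead proves (Lemma~\ref{lem:foliation is taut}) that $\lambda_{(\bar{x},\bar{y})}$ has no compact leaf: a compact leaf would induce a transversely invariant measure on $B_\sigma$, and the existence, for every $i,j$, of an oriented simple closed curve in $F_i$ crossing the branching locus coherently forces all weights on the sectors $D_i^j$ to vanish, i.e.\ multislope $\bar{0}$, contradicting $\bar{x}\neq\bar{y}$. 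The absence of compact leaves persists after filling the complementary regions with product foliations and is what yields tautness. You should replace the closed-transversal claim with an argument of this kind.
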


This gives us the following corollary for closed manifolds.
\begin{corollary}
Let $\widehat{M}(r^j)$ denote the closed manifold obtained from $M$ by a Dehn filling along a multicurve with rational multislope $(r^j)_{j=1}^k$. For each  tuple $(r^j) \in \mathcal U\cap\mathbb Q^k$, the closed manifold $\widehat{M} (r^j)$ also has a transversely oriented  taut foliation.
\end{corollary}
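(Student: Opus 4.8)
The plan is to derive this directly from Theorem~\ref{detailed main} by capping off the foliation across the Dehn filling tori. Fix a tuple $(r^j)\in\mathcal U\cap\mathbb Q^k$. By Theorem~\ref{detailed main}, $M$ carries a transversely oriented taut foliation $\mathcal F$ that meets each boundary torus $T^j$ transversely in the linear foliation of slope $r^j$. Since $r^j$ is rational, $\mathcal F\cap T^j$ is a fibration of $T^j$ by circles of slope $r^j$, and every such circle occurs as a boundary component of exactly one leaf of $\mathcal F$. Let $V^j$ be the solid torus attached to $T^j$ in forming $\widehat M(r^j)$, so that the meridian $\partial D^2\times\{pt\}$ of $V^j=D^2\times S^1$ is glued to a curve of slope $r^j$ on $T^j$; thus the circle leaves of $\mathcal F\cap T^j$ are exactly the meridians of $V^j$ and bound meridian disks.

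First I would extend $\mathcal F$ over each $V^j$ by foliating $V^j$ by its meridian disks $D^2\times\{\theta\}$, $\theta\in S^1$, choosing the parametrization so that $\partial D^2\times\{\theta\}$ is the circle leaf of $\mathcal F\cap T^j$ to which it is glued. After an isotopy supported in a collar of $\partial M$ putting $\mathcal F$ into product form $(\mathcal F\cap T^j)\times[0,1)$ near $T^j$, this matches the product form of the meridian-disk foliation near $\partial V^j$, and the result is a codimension-one foliation $\widehat{\mathcal F}$ of $\widehat M(r^j)$ of the same regularity as $\mathcal F$. Each leaf of $\widehat{\mathcal F}$ is a leaf of $\mathcal F$ with each of its boundary circles capped off by a meridian disk; in particular every leaf of $\widehat{\mathcal F}$ contains a leaf of $\mathcal F$. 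To see that $\widehat{\mathcal F}$ is transversely oriented, note that the transverse orientation of $\mathcal F$ restricts along $T^j$ to a nowhere-zero field tangent to $T^j$ and transverse there to the slope-$r^j$ circles, i.e.\ pointing in the $S^1$-direction of $V^j$; choosing the transverse orientation of the meridian-disk foliation of $V^j$ to agree with it along $T^j$ does the job.

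It then remains to verify that $\widehat{\mathcal F}$ is taut. Since $\mathcal F$ is taut there is a simple closed curve $\gamma\subset\mathrm{int}\,M$, transverse to $\mathcal F$, meeting every leaf of $\mathcal F$. Regarded as a curve in $\widehat M(r^j)$, $\gamma$ is still transverse to $\widehat{\mathcal F}$, and since every leaf of $\widehat{\mathcal F}$ contains a leaf of $\mathcal F$, $\gamma$ meets every leaf of $\widehat{\mathcal F}$. Hence $\widehat{\mathcal F}$ is a transversely oriented taut foliation of $\widehat M(r^j)$.

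The substantive content is all in Theorem~\ref{detailed main}; I do not expect a genuine obstacle in the corollary itself. The only points requiring care are the compatibility of the collar product structures of $\mathcal F$ and of the meridian-disk foliations along the $T^j$ (so that $\widehat{\mathcal F}$ really is a foliation --- this is exactly the mechanism by which Li's laminations extend across a Dehn filling in Theorem~\ref{tao}), and the observation that no leaf of $\widehat{\mathcal F}$ lies entirely inside a filling solid torus, so that the single transversal $\gamma$ continues to meet every leaf.
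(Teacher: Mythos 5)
Your capping-off argument is essentially the proof the paper intends: the corollary is stated as an immediate consequence of Theorem~\ref{detailed main}, and extending the foliation by meridian disks over the filling solid tori, matching collar product structures, and reusing the closed transversal (which still meets every leaf since every new leaf contains a leaf of $\mathcal F$) is exactly the mechanism at work. One caution: rationality of $r^j$ alone does not force $\mathcal{F}\cap T^j$ to be a fibration by circles --- a rational-slope foliation of a torus may have noncompact leaves spiralling onto closed leaves, in which case one must cap off with meridian disks together with spiralling planes over the complementary intervals --- but in the paper's construction the boundary foliation genuinely is by circles (the boundary of $\lambda_{(\bar{x},\bar{y})}$ is a measured lamination of rational slope, hence a union of parallel circles, and the complementary product regions are filled by product fibrations), so your argument goes through as written.
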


The proof of Theorem \ref{detailed main} is outlined below with details worked out in the mentioned lemmas.
\begin{proof}
In Lemma \ref{lem:Existence of good sequence} we shall show that there is
a good positively oriented sequence  $(\alpha_{0}^{j})\to(h^{-1}(\alpha_{0}^{j}))$, or equivalently
from $(h(\alpha_{n}^{j}))\to(\alpha_{n}^{j})$. In Lemma \ref{lem:affine measure}
we shall show that whenever there exists such a positive sequence
there is a splitting of the branched surface $B_{\alpha}$  to a branched surface $B_{\sigma}$ that  is laminar and that therefore carries
laminations realizing every multislope in some open neighborhood of $\bar{0}\in\mathbb R^k$.
And finally in Lemma \ref{lem:foliation is taut} we show that these
laminations extend to taut foliations on all of $M$.
\end{proof}

\begin{lemma}
\label{lem:Existence of good sequence}Let $(\alpha^{j})$ be a parallel
tuple in $F$ and let $h\in Aut^{+}(F)$. Then there is a good
positively oriented sequence 
$(\alpha^{j})\xrightarrow{\sigma}(h(\alpha^{j}))$.
\end{lemma}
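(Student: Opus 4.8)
The plan is to reduce the problem to the case of a single Dehn twist generator and then build the sequence by concatenation. First I would observe that it suffices to find, for each fixed generator $\tau \in \mathcal{H}_{g,k}'$ from Corollary~\ref{Gervaisthm}, a good positively oriented sequence $(\alpha^{j}) \xrightarrow{\sigma_\tau} (\tau(\alpha^{j}))$ whenever $(\alpha^{j})$ is a parallel tuple. Indeed, writing $h = \tau_1 \tau_2 \cdots \tau_m$ as a product of such generators (or their inverses), one obtains $(\alpha^{j}) \to (\tau_m(\alpha^{j})) \to (\tau_{m-1}\tau_m(\alpha^{j})) \to \cdots \to (h(\alpha^{j}))$ by concatenating the sequences for the individual generators, provided each intermediate tuple is again parallel and the concatenation of positively oriented good sequences is again a good positively oriented sequence. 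The latter is immediate from the definitions once one checks that the last tuple of one sequence being the first of the next causes no conflict (the "good" and "positively oriented" conditions are conditions on consecutive pairs, so they glue). One subtlety: the image $\tau(\alpha^j)$ of a parallel tuple under a Dehn twist need not literally be a parallel tuple, only isotopic to one; so I would phrase the induction with an isotopy absorbed at each stage, and note that inserting an isotopy between two consecutive tuples can always be realized by a (possibly trivial) good positively oriented step or absorbed into the neighboring step.

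Second, I would handle the base case: producing a good positively oriented sequence from $(\alpha^{j})$ to $(\beta^{j})$ when $(\beta^{j})$ is a second parallel tuple forming a \emph{good} pair with $(\alpha^{j})$ in the first place — this is essentially the one-step case, and one checks from Figure~\ref{Orientation} that after possibly precomposing with a single extra "rotation" step near each boundary component one can always arrange the pair to be positively (rather than negatively) oriented. The key geometric input is that near each $T^j$ there are exactly two local pictures (a) and (b), and passing from (a) to (b) costs one controlled elementary move. For a general target $\tau(\alpha^j)$ that does \emph{not} form a good pair with $(\alpha^j)$, I would interpolate: choose a finite chain of parallel tuples $(\alpha^j) = (\gamma_0^j), (\gamma_1^j), \dots, (\gamma_N^j) = (\tau(\alpha^j))$ in which consecutive tuples \emph{do} form good pairs, using that any two parallel tuples are connected by a sequence of "elementary" modifications (each supported in an annulus or in the genus-$(g-1)$ piece $S$, sliding one arc across one handle or across one of the annuli $A^j$), and that each such elementary modification replaces a parallel tuple by one forming a good pair with it. Then apply the one-step result to each consecutive pair and concatenate.

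The main obstacle I anticipate is the orientation bookkeeping: "good" is a combinatorial/geometric condition (one transverse intersection point for $i \ne j$, disjoint for $i = j$) that is comparatively easy to arrange by general position plus a minimal-position isotopy, but "positively oriented" is a rigid local condition at \emph{every} boundary component simultaneously, and a single elementary move chosen to fix the picture near $T^j$ might spoil the picture near $T^{j'}$. So the real work is to show the elementary moves can be chosen to act compatibly across all $k$ boundary components at once — using the parallel structure (the arcs $\alpha^j$ are cyclically linked through the annuli $A^j$ and $S$, so a move near one boundary component propagates in a controlled way to the next) — and to verify that the sign accumulated around the cycle is consistent, so that one never gets "stuck" with an odd obstruction forcing a negatively oriented pair. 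I would expect to organize this by first proving the statement for a convenient model tuple $(\alpha_0^j)$ (as in Figure~\ref{spinewithgoodpair}), where the symmetry makes the local pictures transparent, and then transporting the general case to this model by an ambient homeomorphism, which is legitimate because the conclusion is natural under $\mathrm{Aut}^+(F)$.
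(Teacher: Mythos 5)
There is a genuine gap, and you have located it yourself: your reduction requires a good positively oriented sequence $(\alpha^{j})\to(\tau(\alpha^{j}))$ for \emph{every} parallel tuple $(\alpha^{j})$ and every generator $\tau\in\mathcal{H}'_{g,k}$, because in your chain $(\alpha^{j})\to(\tau_m(\alpha^{j}))\to(\tau_{m-1}\tau_m(\alpha^{j}))\to\cdots$ each new generator is applied on the \emph{left} of an already-twisted (hence arbitrary) parallel tuple. To supply this you invoke an unproved interpolation claim --- that any two parallel tuples are joined by a chain of ``elementary modifications'' each of which yields a good pair, and that these can be made simultaneously positively oriented at all $k$ boundary components --- and you then concede that verifying the orientation coherence around the cycle of annuli is ``the real work.'' That is precisely the content of the lemma, so the argument as proposed does not close.

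The paper's proof avoids the interpolation problem entirely by two devices you do not use. First, after changing the handle decomposition it places the tuple $(\alpha^{j})$ in standard position relative to the Gervais curves (Figure~\ref{Gervais}), so that every generator in $\mathcal{H}'_{g,k}$ is either the twist $b^{\pm1}$ about $\beta$ --- which visibly produces a good oriented pair --- or a twist about a curve \emph{disjoint} from all the $\alpha^{j}$, which produces an equal tuple; there is nothing to interpolate. Second, writing $h'=h_m\cdots h_1$, it orders the chain as $(\alpha^{j}),(h_m(\alpha^{j})),(h_mh_{m-1}(\alpha^{j})),\dots$, so that each consecutive pair is the image of the \emph{model} pair $((\alpha^{j}),(h_i(\alpha^{j})))$ under the accumulated homeomorphism $h_m\cdots h_{i+1}$; since goodness and orientation are preserved by homeomorphisms, the one-step check is only ever needed for the model tuple. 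Finally, the passage from negatively to positively oriented steps is not ``one controlled elementary move'' near each boundary component but an explicit global substitution: a negatively oriented good pair is replaced by the three-step positively oriented sequence $((\alpha^{j}),(-\beta^{j}),(-\alpha^{j}),(\beta^{j}))$ obtained by reversing arc orientations, and the degenerate case in which no $b^{\pm1}$ occurs is handled by inserting the pair $b$, $b^{-1}$. Your closing suggestion to work with a model tuple and transport by naturality is in the right spirit, but without the disjointness observation and the reordering of the composition it does not reduce the general step to the model step.
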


\begin{proof} 
By Corollary \ref{Gervaisthm} to Gervais' Theorem, 
$h\sim h_{m}h_{m-1}...h_{2}h_{1}$ for twists $h_{i}$
about curves in $\mathcal{H}'_{g,k}$. Set   $h'=h_{m}h_{m-1}...h_{2}h_{1}$, and notice that $M_{h}=M_{h'}$.

By changing the handle decomposition of $F$ as necessary, we may  assume 
that the parallel tuple $(\alpha^j)$ is as shown in Figure~\ref{Gervais}. Let
$b$ denote the Dehn twist about $\beta\in\mathcal{H}'_{g,k}$. 
Notice that any $h_i$ in the factorization of $h'$ is either $b$, $b^{-1}$, or a twist about a curve disjoint from all components of $\alpha^j$.  Thus $((\alpha^j),(h_i(\alpha^j))$ is either a good positive pair, a good negative pair,  or a pair of equal tuples.

Now if $((\alpha^{j}),(\beta^{j}))$ is a good pair then so is $((h_{i}(\alpha^{j})),(h_{i}(\beta^{j})))$; therefore each of the pairs \\
$((\alpha^{j}),(h_{m}(\alpha^{j}))$, $((h_{m}(\alpha^{j})),(h_{m}h_{m-1}(\alpha^{j})))$, $((h_{m}h_{m-1}(\alpha^{j})), (h_{m}h_{m-1} \\ h_{m-2}(\alpha^{j})))$, ..., $((h_{m}h_{m-1}...h_{2}(\alpha^{j})), (h_{m}h_{m-1}...h_{2}h_{1}(\alpha^{j})=h(\alpha^{j})))$\\
is either a good oriented pair or a pair of equal tuples.

If at least one of the $h_{i}$ is $b$ or $b^{-1}$ then ignoring the equal tuples, we get a good oriented sequence $((\alpha_{0}^{j}),(\alpha_{1}^{j}),...,(\alpha_{n-1}^{j}),(\alpha_{n}^{j})=h((\alpha_{0}^{j})))$ or $(\alpha^{j})\xrightarrow{\sigma}(h(\alpha^{j}))$ as required. The length of this sequence is equal to the number of times $h_{i}$ equals $b$
or $b^{-1}$, i.e, $n=n_{+}+n_{-}$, where $n_{+}$ is the sum of the positive powers of $b$ in this expression of $h'$
and $n_{-}$ is the magnitude of the sum of negative powers of $b$.

If none of the $h_{i}$ are Dehn twists about $\beta$ then $(\alpha^{j})=(h(\alpha^{j}))$.
In this case, $\sigma=((\alpha^{j}),(b(\alpha^{j})),(b^{-1}b(\alpha^{j})=(\alpha^{j})))$
is a good oriented sequence.

If $((\alpha^{j}),(\beta^{j}))$ is a positively oriented good pair
then $((\alpha^{j})$,$(-\beta^{j})$,$(-\alpha^{j})$,\\$(\beta^{j}))$ is a
negatively oriented good sequence. Performing $n_{-}$ such substitutions
we get a positively oriented good sequence $(\alpha^{j})\xrightarrow{\sigma}(h(\alpha^{j}))$.
\end{proof}

\begin{definition}\label{Associated branched surface}
Let $\sigma=(h(\alpha_{n}^{j})=\alpha_{0}^{j},\alpha_{1}^{j},....,\alpha_{n-1}^{j},\alpha_{n}^{j})$
be a good oriented sequence. Let $F_{i}=F\times\{\frac{i}{n}\}$ for
$0\leq i<n$ and let $D_{i}^{j}=\alpha_{i}^{j}\times[\frac{i}{n},\frac{i+1}{n}]$,
for $0\leq i<n$, in $M_{h}$. Let $X=(\cup_{i}F_{i})\cup(\cup_{i,j}D_{i}^{j})$
and orient $F_i$ and $D_i^j$ as in Definition \ref{orientation}. Define $B_{\sigma}=<\cup_i F_{i};\cup_{i,j}D_{i}^{j}>$ as the associated branched surface. Figure~\ref{spinewithgoodpair} shows the neighborhood of $F$ in $X$ while Figure~\ref{mainfig} shows a neighborhood of $F$ in the associated branched surface.
\end{definition}

\begin{figure}
\centering
\def\svgwidth{1\columnwidth}
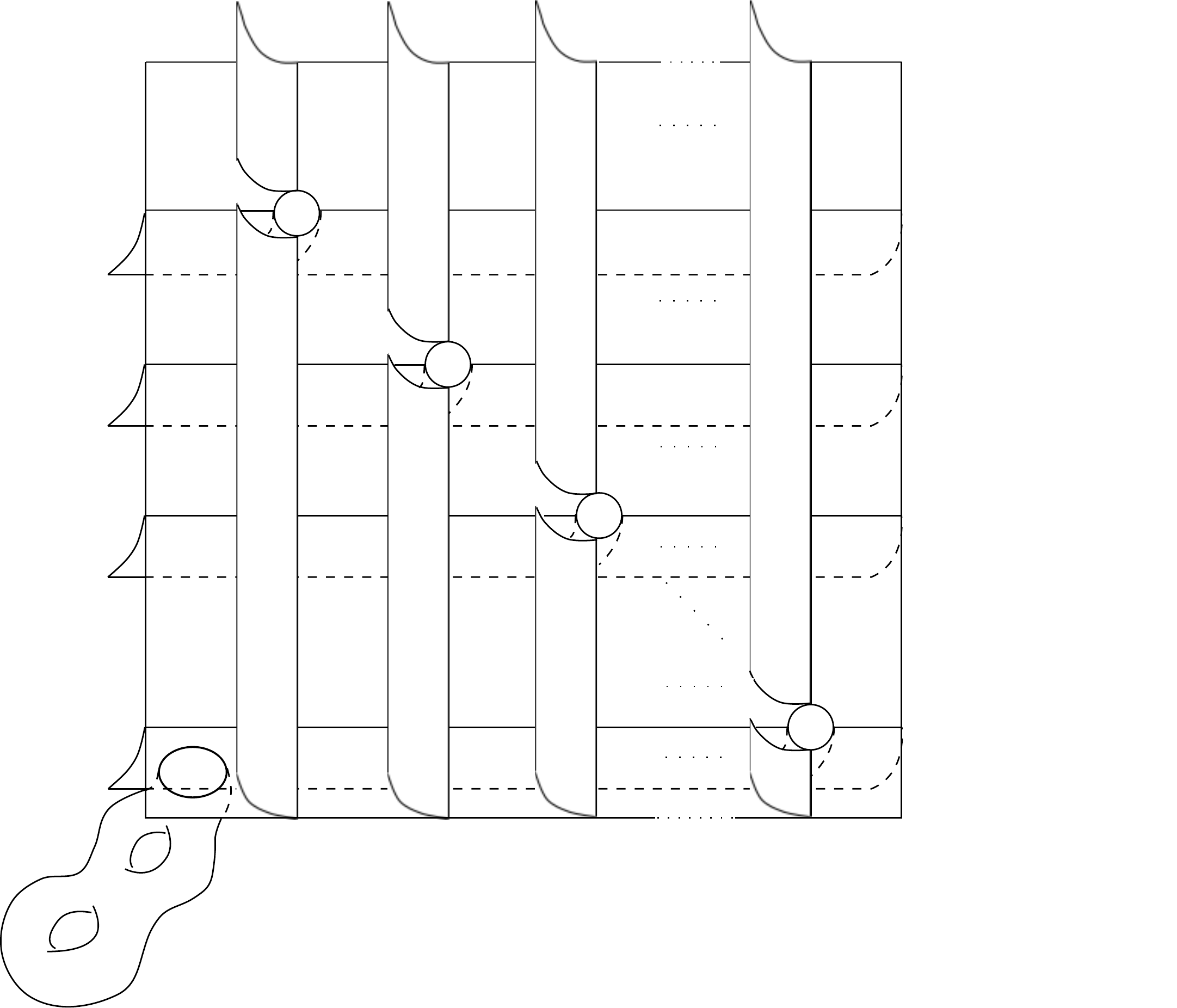
\caption{A neighborhood of one of the fibres in the branched surface $B$. The small circles along the diagonal   represent longitudes of the boundary tori. The vertical subarcs of the boundaries of the vertical disk sectors lie on these boundary tori. Compare with Figure~\ref{spinewithgoodpair}.}\label{mainfig}
\end{figure}

\begin{lemma}
\label{lem:affine measure}Let $\sigma=(h(\alpha_{n}^{j})=\alpha_{0}^{j},\alpha_{1}^{j},....,\alpha_{n-1}^{j},\alpha_{n}^{j})$
be a good oriented sequence in $F$ and $B_{\sigma}$ the associated
branched surface in $M_{h}$. Then $B_{\sigma}$ has no sink disk or half sink disk.
\end{lemma}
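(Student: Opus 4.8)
The plan is to verify condition (5) of Li's definition of laminar branched surface directly from the geometry of $B_\sigma$, by enumerating the branch sectors and tracking the cusp directions along the branching locus. First I would identify the branch sectors of $B_\sigma$: each fiber piece $F_i$ is subdivided by the arcs $\alpha_{i-1}^j$ and $\alpha_i^j$ (the ``incoming'' and ``outgoing'' arcs of the two disk sheets $D_{i-1}^j$ and $D_i^j$ meeting $F_i$) into a collection of sectors, and each product disk $D_i^j = \alpha_i^j\times[\frac in,\frac{i+1}n]$ is a single sector. So there are two families of sectors to examine: the ``vertical'' disk sectors $D_i^j$ and the ``horizontal'' sectors lying in the fibers $F_i$.

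Next I would rule out sink disks and half sink disks among the vertical disk sectors $D_i^j$. The key point here is the orientation convention of Definition~\ref{orientation}: the transverse orientation of $B_\sigma$ is set up so that the branch locus along $\alpha_i^j\times\{\frac in\}$ (where $D_i^j$ meets $F_i$) and along $\alpha_i^j\times\{\frac{i+1}n\}$ (where $D_i^j$ meets $F_{i+1}$) have cusp directions pointing in opposite senses relative to $D_i^j$ — one into the fiber above, one into the fiber below. Concretely, the fibers $F_i$ carry a consistent transverse (i.e.\ ``upward'', increasing-$t$) co-orientation, and when a product disk $D_i^j$ is glued in along two parallel arcs at heights $\frac in$ and $\frac{i+1}n$, exactly one of these two branch curves has its cusp pointing into $D_i^j$ and the other points out of $D_i^j$ into the adjacent fiber. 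Hence $D_i^j$ has at least one branch curve in its boundary with cusp pointing outward, so it is not a sink disk. Since the $D_i^j$ are the only sectors meeting $\partial M$ (their vertical boundary subarcs lie on the boundary tori, as in Figure~\ref{mainfig}), this already shows there are no half sink disks at all.

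Then I would treat the horizontal sectors inside a fiber $F_i$. Here the relevant branch curves in the boundary of such a sector are pieces of $\alpha_{i-1}^j$ (along which $F_i$ branches with $D_{i-1}^j$) and pieces of $\alpha_i^j$ (along which $F_i$ branches with $D_i^j$); the cusp directions along these are again dictated by the transverse orientation, and I would argue that along the arcs $\alpha_{i-1}^j$ the cusp points to one side while along $\alpha_i^j$ it points to the other, so that no fiber sector has all its branch curves cusped inward. This is where the hypothesis that $\sigma$ is an \emph{oriented} good sequence enters in an essential way: ``good'' guarantees that consecutive tuples $(\alpha_i^j)$ and $(\alpha_{i+1}^j)$ meet minimally and in the prescribed pattern (one interior intersection when $i\neq j$, disjoint when $i=j$), so the sectors of $F_i$ are controlled, and ``oriented'' (positively or negatively, consistently) forces the two families of branch arcs $\{\alpha_{i-1}^j\}$ and $\{\alpha_i^j\}$ to sit on opposite sides, cusp-wise, within each sector. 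I expect the main obstacle to be the careful bookkeeping for the horizontal sectors near the genus-$(g-1)$ piece $S$ and near the curve $c$ of the parallel-tuple picture (Figure~\ref{Parallel tuple}), where a single fiber sector can abut many of the branch arcs at once; one must check that even there at least one cusp points outward. I would handle this by a local picture argument: reduce to the standard model of Figure~\ref{mainfig}, observe that each fiber sector is adjacent, along part of its boundary, to a branch arc of each of the two types, and invoke the opposite-orientation property established above. The case analysis is finite once one fixes the parallel tuple in the normal form of Figure~\ref{Gervais}, and the orientation hypothesis makes every case come out the same way.
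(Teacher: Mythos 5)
Your proposal is correct and follows essentially the same route as the paper: it splits the sectors into the vertical product disks $D_i^j$ and the horizontal sectors of the fibers $F_i$, uses the common upward co-orientation of the fibers to produce an outward cusp on each $D_i^j$, and uses the parallel orientation of the arc tuples to produce an outward cusp on each fiber sector. One small inaccuracy: the fiber sectors also meet $\partial M$ (along $\partial F\times\{i/n\}$), so the $D_i^j$ are not the only sectors touching the boundary; this does not affect your conclusion, since you go on to show that no fiber sector is a sink disk, and hence none can be a half sink disk.
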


\begin{proof}
As the sequence $\sigma$ is good and oriented for each fixed $i$, the tuple of arcs $(\alpha_i^j)$ is parallel and $|\alpha_i^j \cap \alpha_{i-1}^k|=\delta_j^k$, so a neighborhood of $F_{i}$ in $B_{\sigma}$ is as shown in  Figure~\ref{mainfig}.
 
The sectors of $B_{\sigma}$ consist of disks $D_i^j=\alpha_i^j\times [\frac{i}{n},\frac{i+1}{n}]$ and components of $F_i\setminus \{ \alpha_i^j\cup\alpha_{i-1}^j\}_{j=1...k}$.   As $F_{i-1}$ and $F_i$ both have a co-orientation in the direction of increasing $t$ for $(x,t) \in M_h$, so for any orientation of $D_i^j$, $\del D_i^j$ is the union of  two arcs in $\del M_h$, together with one arc with the direction of the cusp pointing into the disk and one arc with the direction of the cusp pointing outwards. Similarly, as $\alpha_i^j$ and $\alpha_i^{j+1}$ are oriented in parallel, each disk component of $F_i \setminus \{\alpha_i^j, \alpha_{i-1}^j\}_{j=1...k}$ has a boundary arc with cusp direction pointing outwards. Therefore no branch sector in $B_\sigma$ is a sink disk or a half sink disk.
\end{proof}

\begin{remark}
Notice that $B_{\sigma}=<\cup_i F_{i};\cup_{i,j} D_{i}^{j}>$ is a splitting (see \cite{O2}) of  the original branched surface $B_{\alpha}=<F;\cup_j D^{j}>$ and, equivalently, $B_\sigma$ collapses to $B_{\alpha}$. So in particular, laminations carried by $B_\sigma$ are also carried by $B_\alpha$.

\end{remark}

\begin{figure}
\centering
\def\svgwidth{.5\columnwidth}
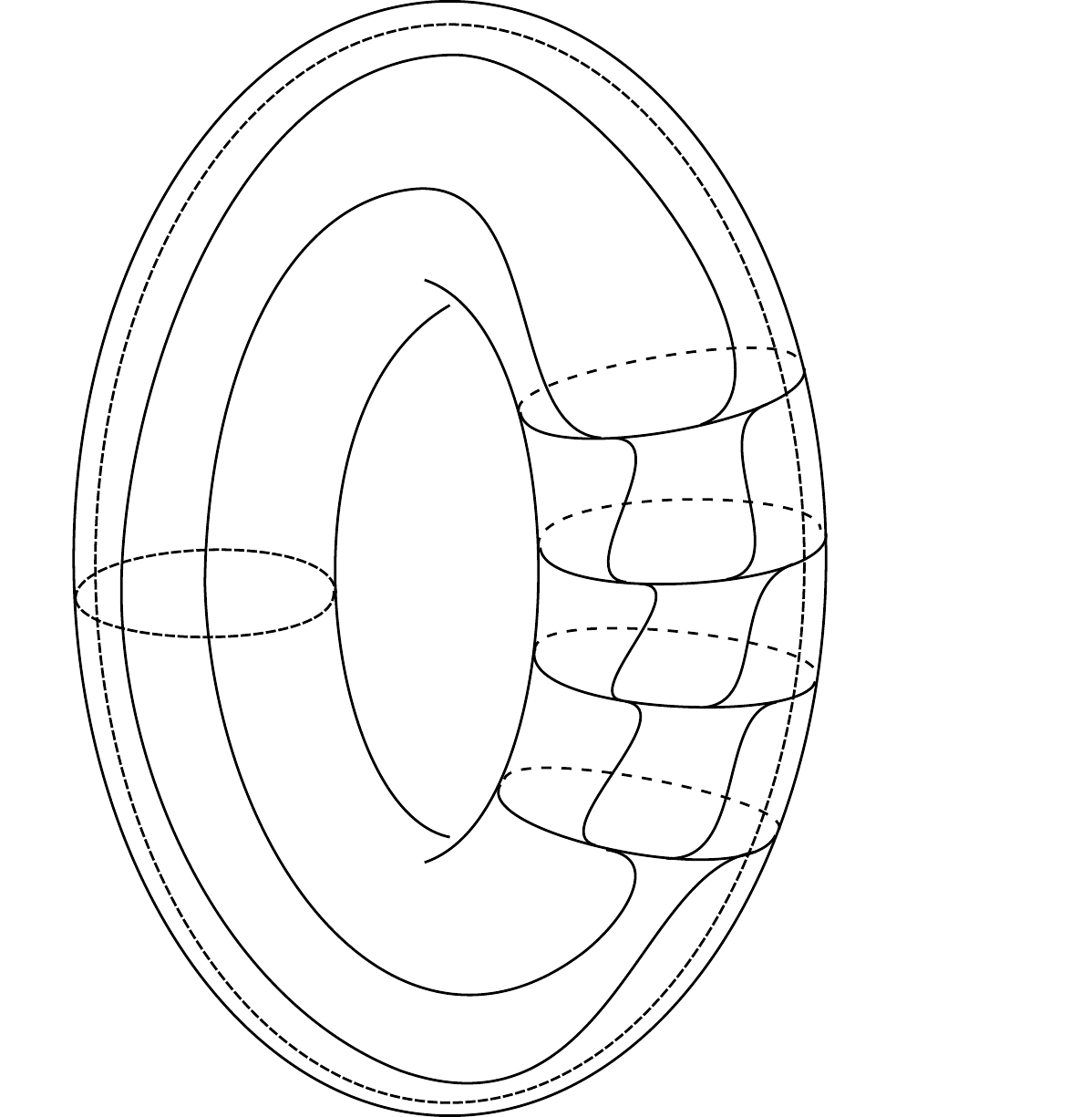
\caption{The weighted boundary train track when $n=4$ }\label{finaltt}
\end{figure}

Now consider the train tracks $\tau^j=B_\sigma\cap T^j$. Focus on one of the $\tau^j$. For some choice
of meridian $\mu_0^j$. Recall that we fixed a coordinate system $(\lambda^j,\mu^j)$ on $T^j$. For simplicity of exposition, we now make a second choice $\mu_0^j$ of meridian. This choice is dictated by the form of $\tau^j$; namely, we choose $\mu_0^j$ to be disjoint from the disks $D_i^j$ so that $\tau^j$  has the form shown in Figure~\ref{finaltt}. Notice that there is a change of coordinates homeomorphism taking slopes in terms to the coordinate system $(\lambda^j,\mu_0^j)$ to slopes in terms of the coordinate system $(\lambda^j,\mu^j)$. Since $\lambda^j$ is unchanged, this homeomorphism takes an open interval about $0$ to an open interval about $0$. Assign to $\tau^j$ the measure determined by weights $x,y$ shown in Figure~\ref{finaltt}. In terms of the coordinate system $(\lambda^j,\mu_0^j)$, $\tau^j$ carries all slopes realizable by $$\frac{x-y}{n(1+y)}$$ for some $x, y >0$.
Therefore, in terms of the coordinate system 
$(\lambda^j,\mu_0^j)$, $\tau^j$ carries all slopes in $(-\frac{1}{n},\infty)$. Converting to the coordinate system 
$(\lambda^j,\mu^j)$, $\tau^j$ carries all slopes in some open neighborhood of $0$. 
Repeat for all $j$. By Theorem~\ref{tao}, we see that the branched surface  $B_{\sigma}$ carries laminations 
$\lambda_{(\bar{x},\bar{y})}$ realizing multislopes  $(\frac{x_1-y_1}{n(1+y_1)},\frac{x_2-y_2}{n(1+y_2)},...,\frac{x_k-y_k}{n(1+y_k)})$ for any strictly positive values of $x_1,...,x_k,y_1,...,y_k$ and hence realizing all rational multislopes in some open neighborhood of $\bar{0}\in \mathbb R^k$.

\begin{lemma}
\label{lem:foliation is taut} Suppose the weights $\bar{x},\bar{y}$ are distinct and have strictly positive coordinates. Then
each  lamination $\lambda_{(\bar{x},\bar{y})}$, 
contains only noncompact leaves. Furthermore, each  lamination  $\lambda_{(\bar{x},\bar{y})}$
extends to a taut foliation $\mathcal{F}_{(\bar{x},\bar{y})}$, which realizes the same multislope.
\end{lemma}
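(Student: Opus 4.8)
The plan is to handle the two assertions separately: first that $\lambda_{(\bar x,\bar y)}$ has no compact leaves, then that it extends to a taut foliation. For the first, I would argue by examining how leaves of a lamination carried by $B_\sigma$ meet the fibers $F_i$. Each leaf, being carried by $B_\sigma$, is built out of pieces that are parallel copies of subsurfaces of the $F_i$ glued along parallel copies of the vertical disk sectors $D_i^j$. Since every $D_i^j$ meets $\partial M_h$ (its boundary contains two vertical arcs on the boundary tori, as noted in the caption of Figure~\ref{mainfig}), any leaf that uses a horizontal piece touching some $\alpha_i^j$ necessarily runs out to $\partial M_h$ along the train track $\tau^j$; since the weights are strictly positive, every sector of $B_\sigma$ is actually used, so no leaf can avoid the boundary-parallel behavior. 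More carefully: a compact leaf would have to be a closed surface carried by $B_\sigma$ with no boundary, hence realizing multislope $\bar 0$ on each $T^j$, which forces the weights $x_j=y_j$; the hypothesis that $\bar x,\bar y$ are distinct (combined with positivity) rules this out on at least one component, and then an innermost/transfer argument along the product structure propagates noncompactness to every leaf. I would phrase this via the transverse curve: $B_\sigma$ contains the core circle $\{pt\}\times S^1$ transverse to all the $F_i$, and any compact leaf must intersect it, giving a contradiction with the irrationality/non-boundary-slope condition.

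**Extending to a foliation.** For the second assertion I would invoke the now-standard machinery for passing from an essential lamination with no compact leaves carried by a branched surface with nice horizontal boundary to a taut foliation: fill in the complementary regions. The complement $M_h \setminus \lambda_{(\bar x,\bar y)}$ consists of $I$-bundles over the (noncompact) leaves together with the solid-torus-like or product pieces coming from $M_h\setminus N(B_\sigma)$; by Lemma~\ref{lem:affine measure} there are no sink disks or half sink disks, and the branched surface is laminar (this is where Theorem~\ref{tao} and the discussion preceding Lemma~\ref{lem:foliation is taut} are used), so the complementary regions are products $P\times I$ or can be foliated as interval bundles by leaves spiraling onto the boundary leaves. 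Since no leaf is compact, there are no Reeb components to obstruct tautness, and one can choose the filling so that the transverse circle $\{pt\}\times S^1$ persists as a closed transversal hitting every leaf. I would cite the construction of Gabai–Oertel / Li (and the explicit version used in the authors' companion papers \cite{Ro,R2,KR2}) for the mechanics of Denjoy-blowing-down and foliating product complementary regions, emphasizing that the key input—absence of compact leaves—was just established.

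**Main obstacle.** The technical heart, and the step I expect to require the most care, is verifying that the complementary regions really are all products (or interval bundles over noncompact surfaces) so that the foliation one builds is genuinely taut rather than merely an essential lamination filled in with Reeb pieces. Concretely one must understand $M_h \setminus \lambda_{(\bar x,\bar y)}$: the risk is a monogon or a $D^2\times I$ region whose horizontal boundary leaves are compact, which would force a Reeb annulus/torus upon filling. The argument that no such region occurs should follow from the parallel-tuple structure—each $F_i\setminus\{\alpha_i^j\cup\alpha_{i-1}^j\}$ has the specific form of annuli plus one genus-$(g-1)$ piece, and the genus being positive is exactly what prevents a disk-of-contact from closing up—so I would spend the bulk of the proof tracking the complementary regions sector by sector, using the explicit local picture of Figure~\ref{mainfig}, and conclude that each is an $I$-bundle over a noncompact surface, which can be foliated compatibly with the boundary leaves to yield the transversely oriented taut foliation $\mathcal F_{(\bar x,\bar y)}$ realizing the multislope $(\frac{x_j-y_j}{n(1+y_j)})_j$.
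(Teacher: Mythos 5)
Your overall strategy coincides with the paper's (rule out compact leaves, then fill the product complementary regions), but the first half has a genuine gap. You assert that a compact leaf ``would have to be a closed surface carried by $B_\sigma$ with no boundary''; this is false --- a compact leaf of $\lambda_{(\bar x,\bar y)}$ could a priori be a compact surface with boundary on the tori $T^j$ (the fiber itself is the obvious candidate), and nothing in your argument excludes a compact leaf whose boundary consists of closed curves of some nonzero rational slope. Your fallback via ``the irrationality/non-boundary-slope condition'' does not apply: the multislopes of interest here are rational, and a compact leaf meeting a closed transversal is not by itself a contradiction. The missing idea is the one the paper actually uses: a compact leaf $L$ determines a transversely invariant measure on $B_\sigma$ by counting intersections of the $I$-fibers with $L$; since each fiber $F_i$ contains an oriented simple closed curve (e.g.\ a parallel push-off of $\alpha_i^j$) that crosses the branching locus coherently with the cusp directions, additivity of the invariant measure around such a loop forces the weight of every sector $D_i^j$ to vanish. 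Hence $L$ is carried by $\cup_i F_i$ alone, so it is a union of fibers and realizes multislope $\bar 0$, giving $\bar x=\bar y$ and contradicting the hypothesis. Without this (or an equivalent) argument, your claim that every leaf is noncompact is unsupported; the ``innermost/transfer'' propagation step you invoke is likewise not substantiated.

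The second half of your proposal is essentially the paper's: the complementary regions of $\lambda_{(\bar x,\bar y)}$ are product regions, one fills them with product fibrations, and the resulting foliation has no compact leaves and is therefore taut. Your extended discussion of monogons, sink disks, and Reeb pieces is a reasonable precaution, but those issues are already disposed of by Lemma~\ref{lem:affine measure} and Theorem~\ref{tao}; the paper handles this step in two sentences once noncompactness of all leaves is established.
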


\begin{proof}
 Suppose that  $\lambda_{(\bar{x},\bar{y})}$ contains a compact leaf $L$.  Such a leaf determines a transversely invariant measure on $B$ given by counting intersections with $L$.

Now focus on  any $i,j$, where $0\le i,j<n$. By considering, for example, a simple closed curve in $F_i$ parallel to the arc $\alpha_i^j$, we see that there is an oriented simple closed curve in $F_i$ which intersects the branching locus of $B_{\sigma}$ exactly $k$ times that has orientation consistent with the branched locus. Since this is true for all possible $i,j$, it follows that the only transversely invariant measure $B$ can support is the one with all weights on the branches $D_i^j$ necessarily $0$. But this means that $\lambda_{(\bar{x},\bar{y})}$ realizes multislope 
$\bar{0}$ and hence that $\bar{x}=\bar{y}$.

The complementary regions to the lamination $\lambda_{(\bar{x},\bar{y})}$ are product
regions. Filling these up with product fibrations, we get the required foliation $\mathcal{F}_{(\bar{x},\bar{y})}$,  which also has no compact leaves and is therefore taut.
\end{proof}

\section{Example}\label{Example}

 As discussed in the introduction, an open book with connected binding and monodromy with fractional Dehn twist coefficient more than one supports a contact structure which is the deformation of a co-orientable taut foliation \cite{HKMRV2}. However for open books with disconnected binding there is no such universal lower bound on the fractional Dehn twist coefficient. This was illustrated by Baldwin-Etnyre \cite{BE} who constructed a sequence of open books with arbitrarily large fractional Dehn twist coefficients and disconnected binding that support contact structures which are not deformations of a taut foliation. This shows, in particular, that there is no global neighborhood  about the multislope of the fiber of a surface bundle such that Dehn filling along rational slopes in that neighborhood produces closed manifolds with taut foliations. 

The notion of `sufficiently close' in Corollary \ref{Maincor} can however be bounded below for a given manifold. Deleting a neighbourhood of the binding in the Baldwin-Etnyre examples gives a surface bundle and using the techniques developed in the previous sections we now calculate a neighborhood  of multislopes realized by taut foliations, around the multislope of the fiber in this fibration. In particular, we  observe that this neighbourhood does not contain the meridional multislope. So Dehn filling along these slopes does not give a taut foliation of the sequence of Baldwin-Etnyre manifolds, as is to be expected.

The following is a description of the Baldwin-Etnyre examples\cite{BE}. Let $T$ denote the genus one surface with two boundary components, $B_1$ and $B_2$. Let $\psi$ be the diffeomorphisms of $T$ given by the product of Dehn twists,
$$\psi = D_a D_b^{-1} D_c D_d^{-1}$$
where $a,b,c$ and $d$  are the curves shown in Figure \ref{BaldwinEtnyre} (reproduced from Figure 1 of Baldwin-Etnyre\cite{BE}). Then $\psi$ is pseudo-Anosov by a well-known construction of Penner \cite{Pen}. We define $$\psi_{n,k_1,k_2}=D_{\delta_1}^{k_1} D_{\delta_2}^{k_2}\psi^n$$ where $\delta_1$ and $\delta_2$ are curves parallel to the boundary components $B_1$ and $B_2$ of $T$. 

\begin{figure}
\centering
\includegraphics[width=0.5\textwidth]{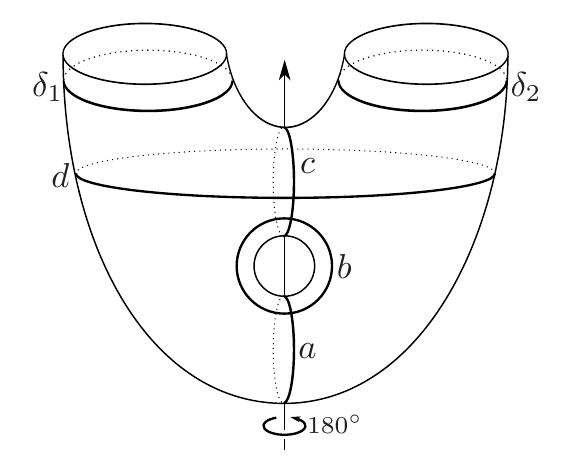}
\caption{The Baldwin-Etnyre examples}\label{BaldwinEtnyre}
\end{figure}

Let $M_{n,k_1, k_2}$ be the open book $(T, \psi_{n,k_1,k_2}$).  Let $N(B_{1})$, $N(B_{2})$ be regular neighbourhoods of $B_{1}$ and $B_{2}$ in $M_{n,k_{1},k_{2}}$ and let $M'_{n,k_{1},k_{2}}=M_{n,k_{1},k_{2}}\setminus(N(B_{1})\cup N(B_{2}))$.
Let $\lambda_{1}$, $\lambda_{2}$ be the closed curves in $T\cap\del M'_{n,k_{1},k_{2}}$ represented by $B_{1}$, $B_{2}$, with induced orientation. The monodromy
$\psi_{n,k_{1},k_{2}}$ is freely isotopic to the pseudo-Anosov map
$\psi^{n}$. Let $\mu_{1}$, $\mu_{2}$ be the suspension flow of
a point in $\lambda_{1}$ and $\lambda_{2}$ respectively under the
monodromy $\psi^{n}$. As $\psi^{n}$ is the identity on $\del T$,
  $\mu_i=p_{i}\times S^1$ in $\del M'_{n,k_{1},k_{2}}=(B_1 \times S^1)\cup(B_2 \times S^1)$ for $p_{i}\in\lambda_{i}$.

We use these pair of dual curves $(\lambda_{1},\mu_{1})$ and $(\lambda_{2},\mu_{2})$ as coordinates to calculate the slope of curves on the torii boundary
of $M'_{n,k_{1},k_{2}}$ as detailed in Subsection~\ref{multislope}.

If $D_{1}$ is the meridional disk of a regular neigbourhood $N(B_{1})$
of $B_{1}$ in $M_{n,k_{1,}k_{2}}$ then $\del D_{1}=\mu_{1}$. Similarly,
for $D_{2}$ a meridional disk of a regular neighbourhood of $B_{2}$
in $M_{n,k_{1},k_{2}}$, $\del D_{2}=\mu_{2}$.

In order to express the monodromy of the surface bundle in terms of
the Gervais generators we use the pseudo-Anosov monodromy
$\psi^n = \psi_{n,0,0}$ which is freely isotopic to $\psi_{n,k_{1},k_{2}}$, with the observation that Dehn filling $M'_{n,0,0}$ along slopes $-\frac{1}{k_{1}}$ and
$-\frac{1}{k_{2}}$ gives the manifold $M_{n,k_{1},k_{1}}$. So for
$M'_{n,0,0}$ we have $slope(\del D_{1})=-\frac{1}{k_{1}}$, $slope(\del D_{2})=-\frac{1}{k_{2}}$. 

As shown in Theorem 1.16 of Baldwin-Etnyre\cite{BE}, for any $N>0$ there exist $n,k_1 > N$ such that the corresponding open book in $M_{n,k_1,n}$ has a compatible contact structure that is not a deformation of the tangent bundle of a taut foliation. We shall now show that the slope $-\frac{1}{n}$ lies outside the interval of perturbation that gives slopes of taut foliations via our construction. And hence, the manifolds $M_{n,k_1,n}$ cannot be obtained by capping off the taut foliations realized by our interval of boundary slopes around the fibration.

To obtain the branched surface required in our construction in the previous sections  we need a good sequence of arcs $\alpha^{j}\to\phi^{-1}(\alpha^{j})$ where $\phi=\psi^{n}$, $j=1,2$. These arcs are used to construct product disks which we then smoothen along copies of the fiber surface to get the required branched surface. 

Following the method outlined in Lemma \ref{lem:Existence of good sequence} we need to express $\phi^{-1}$ in terms of the Gervais generators. The curves $a$,
$b$ and $c$ correspond to the generating curves $\eta_{1}$, $\beta$
and $\eta_{2}$ among the Gervais generators as can be seen in Figure \ref{Gervais}.
We now need to express the curve $d$ in terms of these generating curves.

\begin{definition}
Let $S_{g,n}$ be a surface of genus $g$ and $n$ boundary components. Consider a subsurface of $S_{g,n}$ homeomorphic to
$S_{1,3}$. Then for curves $\alpha_{i}$, $\beta$, $\gamma_{i}$
as shown in the Figure \ref{starrelation} (reproduced from Figure 2 of Gervais\cite{Ge}), the star-relation is $$(D_{\alpha_{1}}D_{\alpha_{2}}D_{\alpha_{3}}D_{\beta})^{3}=D_{\gamma_{1}}D_{\gamma_{2}}D_{\gamma_{3}}$$
where $D$ represents Dehn-twist along the corresponding curves.
\end{definition}

\begin{figure}
\centering
\def\svgwidth{0.4\columnwidth}
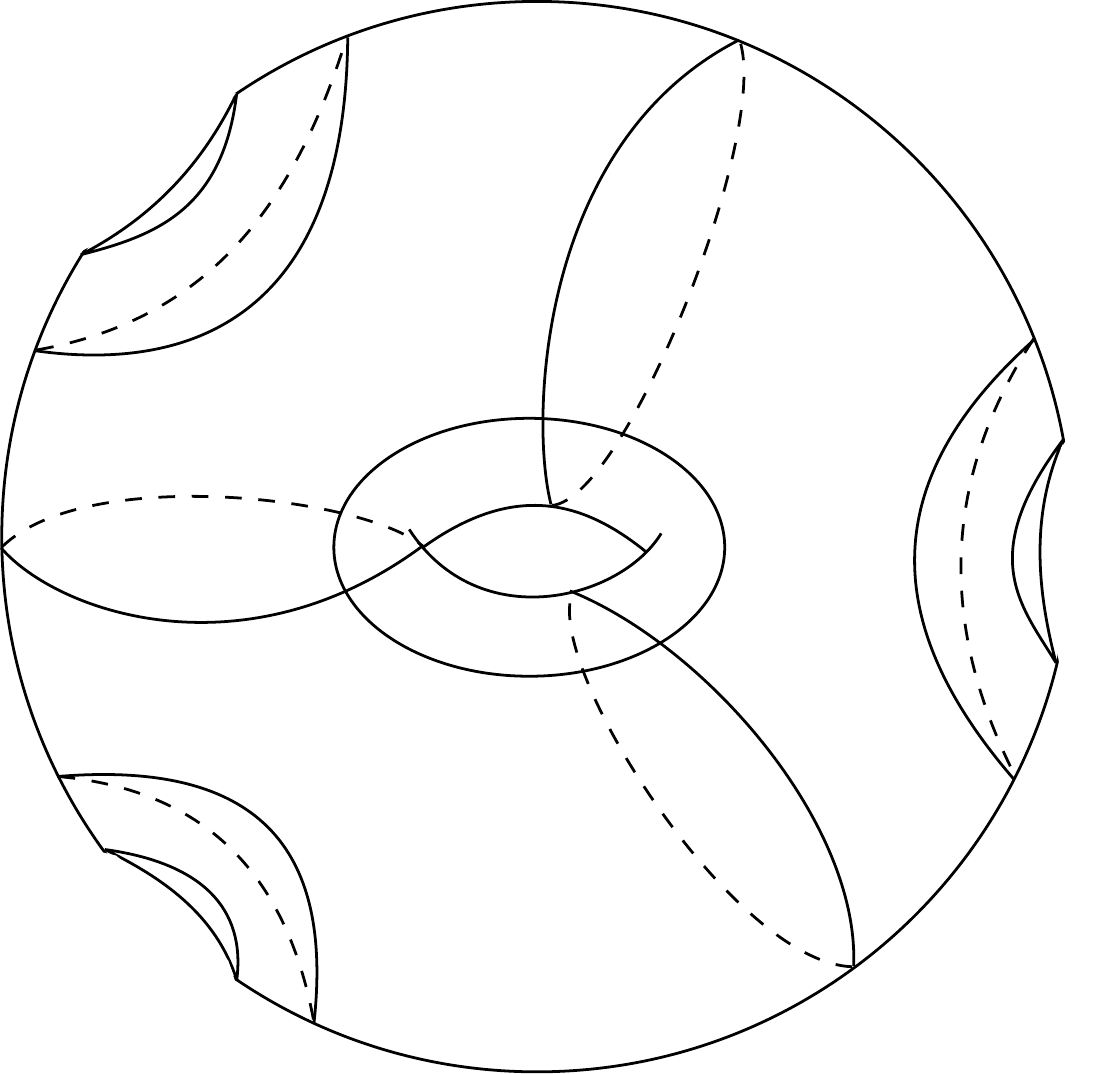
\caption{The Gervais star-relation}\label{starrelation}
\end{figure}

Let $S$ be the component of $T\setminus d$ which is homeomorphic
to a once-punctured torus. Let $\gamma_{1}=d$ and $\gamma_{2}$, $\gamma_{3}$
be curves bounding disjoint disks $D_1$ and $D_2$ in $S$ so that $S\setminus(D_1 \cup D_2)$
is homeomorphic to $S_{1,3}$. As $\gamma_{2},\gamma_{3}$ are trivial
in $T$, $\gamma_{1}=d$ and $\alpha_{1}=\alpha_{2}=\alpha_{3}=a$,
so the star relation reduces to $D_{d}=(D_{a}^{3}D_{b})^{3}$.

Hence, the monodromy $\psi$ in terms of the Gervais generators is
the word $$\psi = D_{a}D_{b}^{-1}D_{c}(D_{a}^{3}D_{b})^{-3}$$ which gives
us $$\psi^{-1}=D_{a}^{3}D_{b}D_{a}^{3}D_{b}D_{a}^{3}D_{b}D_{c}^{-1}D_{b}D_{a}^{-1}$$
Take arcs $\alpha_{1}$, $\alpha_{2}$ as shown in Figure \ref{Gervais}, where $a=\eta_{1}$, $b=\beta$ and
$c=\eta_{2}$. Then as $(\alpha_{j},D_{b}(\alpha_{j}))$ is a negatively
oriented pair and $\alpha_{j}=D_{a}(\alpha_{j})$, $\alpha_{j}=D_{c}(\alpha_{j})$
so we have a negatively oriented good sequence $(\alpha_{1},\alpha_{2})\to(\psi^{-1}(\alpha_{1}),\psi^{-1}(\alpha_{2}))$
obtained by taking the sequence of arcs $$\sigma=(\alpha_{j}, D_{a}^{3}D_{b}(\alpha_{j}), D_{a}^{3}D_{b}D_{a}^{3}D_{b}(\alpha_{j}), 
D_{a}^{3}D_{b}D_{a}^{3}D_{b}D_{a}^{3}D_{b}(\alpha_{j}),$$
$$D_{a}^{3}D_{b}D_{a}^{3}D_{b}D_{a}^{3}D_{b}D_{c}^{-1}D_{b}D_{a}^{-1}(\alpha_{j})=\psi^{-1}(\alpha_{j})) \mbox{ for $j=1,2$.} $$

Let $B_{\sigma}$ be the branched surface corresponding to this good
oriented sequence as in Definition \ref{Associated branched surface}. The weighted train track $\tau_{\sigma}=B_{\sigma}\cap\del M'_{n,0,0}$ on the boundary torii
is as shown in Figure \ref{finaltt}.

The slope of this measured boundary lamination is $\frac{x-y}{4(1+y)}$ so the interval of slopes that are realized by taut foliaions is $(-\frac{1}{4}, \infty)$. 

When the monodromy is $\psi^n$ (instead of $\psi$), by a similar argument, we get the slope of the measured lamination on the boundary as $\frac{x-y}{4n(1+y)}$ so that the interval of slopes realized by taut foliations is $(-\frac{1}{4n},\infty)$. And we observe that the point $(-\frac{1}{k_1}, -\frac{1}{n}) \notin (-\frac{1}{4n},\infty) \times (-\frac{1}{4n},\infty)$, i.e, the taut foliations from our construction cannot be capped off to give a taut foliation of the Baldwin-Etnyre examples.

\bibliographystyle{amsplain}

\end{document}